\documentclass[a4paper,11pt]{amsart}
\usepackage[cp1250]{inputenc}
\usepackage{epic}
\usepackage{amssymb,amsmath,amsthm}
\usepackage{xcolor,enumerate,verbatim}
\newtheorem{thm}{Theorem}[section] \newtheorem{lem}[thm]{Lemma}
\newtheorem{wnioch}[thm]{Corollary} \newtheorem{obs}[thm]{Observation}
\newtheorem{prop}[thm]{Proposition}
\newtheorem{prob}[thm]{Problem}
\theoremstyle{definition}
\newtheorem{deff}[thm]{Definition} \newtheorem{przy}[thm]{Example} \newtheorem{rem}[thm]{Remark}
\newcommand{\N}{\mathbb{N}} \newcommand{\R}{\mathbb{R}}\newcommand{\Q}{\mathbb{Q}}
\newcommand{\RbezQ}{\mathbb{R\setminus Q}}\newcommand{\mcU}{\mathcal{U}}
\newcommand{\TcompH}{ T_{2\frac{1}{2}} }
\newcommand{\cl}[1]{\overline{#1}}
\title{Locally ordered topological spaces}
\keywords{order topology; linear order; separation axioms; connected space; local connectedness;
compact space; Lindel\" of space; hereditary property;}
\subjclass[2010]{Primary 54F05; Secondary 06F30, 54D10, 54E99}

\author{Piotr Pikul}
\address{P. Pikul, Instytut Matematyki, {Wydzia\l} Matematyki i Informatyki,
Uniwersytet Jagiello\' nski, ul. \L ojasiewicza 6, 30-348 Krak\' ow, Poland}
\email{Piotr.Pikul@im.uj.edu.pl}
\usepackage{hyperref}

\begin{document}
\begin{abstract}While topology given by a linear order has been extensively studied,
this cannot be said about the case when the order is given only locally. The aim of this paper
is to fill this gap. We consider relation between local orderability and separation
axioms and give characterisation of all connected, locally connected or compact
locally ordered Hausdorff spaces.
A collection of interesting examples is also offered.
\end{abstract}
\maketitle
\section{Introduction}
Formal definition of an ordered set appeared in 1880 due to C{.} S{.} Pierce but the idea
was somehow present in mathematics and philosophy long before. While mentioning early research
on ordered sets names of Dedekind and Cantor cannot be omitted. Some historical notes can be found
in \cite{nachbin}. 

The concept of order topology appeared probably at the same time as abstract topology itself.
On the one hand it is a very classical notion today, on the other, there were studied various
different connections between topology and order on a set (see e.g. \cite{closedin} or \cite{nachbin}).
Several notable results concerning linearly ordered spaces and their subspaces
were listed in \cite{history}.
For more recent works the reader is referred to \cite{recprog}.

The natural notion of a locally ordered topological space, which we consider in this article,
seem to had appeared only once, in not easily accessible dissertation by Horst Herrlich
\cite{herrlich}, where the main concern was whether a space is orderable.
Several results are similar to those presented in this article; however, we omit the
notion of end-finite space (i.e. space with at most two non-cutpoints
for each connected component) and put some attention to the case when a locally
ordered space is not orderable.

The aim of this paper is to present general results concerning locally ordered spaces
and the most classical topological notions.
Our survey starts with basic definitions and observations concerning
separation axioms and hereditarity of local orderability.
Then we pass to properties of connected and locally connected spaces and prove
characterisation of all connected $T_3$ locally ordered spaces (Theorem~\ref{thm.class}).
This leads also to description of both locally connected (Corollary~\ref{loc-conn})
and Lindel\"of (Theorem~\ref{thm.lind}) among such spaces.
Provided characterisation is valid also for arbitrary connected or compact subsets
of a locally ordered $T_3$ space.

All notable examples of locally ordered spaces are presented in the separate section.
Some of them are well known to topologists but possibly not for their local orderability.

\subsection{Notation and terminology}
We are not going to denote topological spaces formally as pairs $(X,\tau)$.
Since we never refer to two different topologies on one set at a time,
the risk of ambiguity is minimal.

As we will see at the very beginning, all spaces of concern would be $T_1$
hence we are not going to distinguish e.g. between ``$T_4$ space'' and ``normal space''.
By \emph{Urysohn space} ($\TcompH$) we mean space in which every two distinct points have
neighbourhoods with disjoint closures.
We call a space \emph{completely Hausdorff} if its any two distinct points can be
separated by a real-valued function.
By \emph{semiregular space} we mean Hausdorff space which has a basis consisting of sets being
interiors of their closures.

When considering ordered sets we refer to strict (irreflexive) linear order relations (cf. \cite{engel}).
In most cases we do not denote the ordering relations explicitly, similarly like the topology on a set.
We write ``K is an open interval with respect to the order on U'' as long as
it is a sufficient clarification. The natural notation ``$(a,b)_U$'' for
an (open) interval in $U$ is also used. Symbols like ``$(\leftarrow,b]$'' and
``$(a,\rightarrow)$'' denote unbounded intervals.

By a closed interval we mean an interval including both its least and its greatest element
-- not any interval which is a closed set. For open intervals there is no such ambiguity.

A~subset of a linearly ordered set 
is called \emph{convex} if for its every two points it contains
the interval spanned by them. The term \emph{endpoint} stands for the superemum
or the infimum of a convex set (they may not exist).

\section{Basic definitions and properties}
\subsection{Order topology}
First let us recall basic facts about classical order topology.
They belong to the folklore and are mostly mentioned as exercises (see e.g. \cite{engel}, \cite{willard}).

Given linearly ordered set $(X,<)$, by \emph{order topology}
(called also an open interval topology) we mean a topology defined
by the basis consisting of open intervals (including unbounded) in $(X,<)$.
Space $X$ with an order topology is called \emph{linearly ordered (topological) space}
or \emph{orderable space} (if the order on $X$ is not fixed).
We call a linear order on $X$ \emph{compatible with the topology} if
the associated order topology equals the topology on $X$.

Every linearly ordered (orderable) space is hereditarily normal ($T_5$) -- even hereditarily
collectionwise normal (\cite{colwise}).

Every connected subset of a linearly ordered topological space has to be convex.
Closure and interior of a convex set are convex.

A~linear ordering $<$ on $X$ is called \emph{continuous} if it is dense and every
convex set is an interval (possibly unbounded)\footnote{The standard order on rational numbers
is dense, but the convex set $\{x\in \mathbb{Q}: x^2<2\}$ is not an interval in $\mathbb Q$.}.
This is equivalent to the connectedness of associated order topology (cf. \cite[Problem 5.3.2]{engel}).
A subset of a connected linearly ordered space is connected if and only if it is an interval.

On a connected orderable space containing at least two points, there are precisely two
linear orders (each one is the reverse of the other) compatible with the topology.
Although the fact is long-known (e.g. \cite[II.]{herrlich}), we provide an ``exceptionally topological''
proof in the Appendix (Theorem~\ref{thm.con-ord}). 

Compactness of order topology is equivalent to the existence of supremum and infimum 
for any subset (cf. \cite[Problem 3.12.3 (a)]{engel}).
For a connected space it is enough to check whether it has the smallest and the greatest element.
Every connected linearly ordered space is automatically locally connected and locally compact.
On arbitrary compact subset of a linearly ordered space the subspace topology and the
order topology given by the linear order inherited from the original space coincide
(\cite[I. Satz 13c]{herrlich}).
In particular orderability is hereditary on compact subspaces.

\subsection{Locally order topology}
Now we can pass to main definitions of this article.

\begin{deff}
Let $X$ be a~topological space. We say $X$ is a \emph{locally ordered topological space}
(or \emph{has locally order topology}) if each point in $X$ has an orderable neighbourhood.
An open cover of $X$ consisting of linearly ordered sets with fixed linear orders will be called
an \emph{atlas of orders}.

We say $X$ is \emph{regularly locally ordered} if each point in $X$ has a neighbourhood
whose closure is orderable. Then a \emph{regular atlas of orders} is an open cover of $X$
together with fixed linear orders on the closures.
\end{deff}
Note that at the beginning we do not assume that considered spaces satisfy any separation axiom.

Let us make some basic observations. 
\begin{obs}\mbox{ } \begin{enumerate}[\ 1.]
\item Every space with order topology is regularly locally ordered. 
\item Every one dimensional topological manifold is regularly locally ordered.
\end{enumerate}
\end{obs}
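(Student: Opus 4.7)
Item 1 is immediate from the definitions: if $X$ has an order topology, then $X$ itself is an orderable neighbourhood of every point, and its closure (equal to $X$) is orderable. So $X$ is regularly locally ordered.

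For item 2, the plan is to use charts to pull back open intervals from $\mathbb{R}$. I would fix a point $p$ of a one dimensional topological manifold $X$ (taken Hausdorff, following the standard convention) and choose an open neighbourhood $U$ of $p$ together with a homeomorphism $\varphi\colon U\to \varphi(U)$ onto an open subset of $\mathbb{R}$. Then I would pick a bounded open interval $I\ni \varphi(p)$ with $\cl{I}\subseteq \varphi(U)$ (possible since $\varphi(U)$ is open in $\mathbb{R}$) and set $V=\varphi^{-1}(I)$; the goal is to prove that $\cl{V}^X=\varphi^{-1}(\cl{I})$, after which orderability of the closed interval $\cl{I}$ transfers across $\varphi$ to give an order topology on $\cl{V}^X$.

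The main technical step is the identification $\cl{V}^X=\varphi^{-1}(\cl{I})$. The inclusion ``$\subseteq$'' holds because $\varphi^{-1}(\cl{I})$ is homeomorphic to the compact interval $\cl{I}$, hence compact, and therefore closed in $X$ by Hausdorffness. For the reverse inclusion, since $\varphi$ is a homeomorphism and $\cl{I}\subseteq \varphi(U)$, the closure of $V$ within the subspace $U$ equals $\varphi^{-1}(\cl{I})$, and the closure computed in the subspace $U$ is always contained in the closure in $X$. The main (mild) obstacle lies precisely in this identification: without the Hausdorff assumption on the manifold (compare the ``line with two origins''), compact subsets need not be closed and the equality $\cl{V}^X=\varphi^{-1}(\cl{I})$ could fail.
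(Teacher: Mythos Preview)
Your argument is correct. The paper states this observation without proof, so there is nothing to compare against; your write-up is a clean and complete justification of what the author leaves implicit, including the relevant caveat about the Hausdorff hypothesis in item~2.
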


In particular a~circle is locally ordered space but not orderable at the same time
and cannot be embedded in a linearly ordered space.
Later we are introducing a~whole class of spaces sharing those properties.

The following theorem is also a simple observation.
\begin{thm}
Every open subspace of locally ordered space is itself locally ordered.
\end{thm}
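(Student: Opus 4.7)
The plan is to take an arbitrary point $y$ of the open subspace $Y\subseteq X$ and construct an open orderable neighbourhood of $y$ in $Y$ by shrinking a given orderable neighbourhood of $y$ in $X$ to an open interval small enough to fit inside $Y$.

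Concretely, I would start by invoking local orderability of $X$ to obtain an open neighbourhood $U\subseteq X$ of $y$ equipped with a linear order whose order topology coincides with the subspace topology inherited from $X$ (one may simply take $U$ from an atlas of orders). Since $Y$ is open in $X$, the set $U\cap Y$ is open in $U$. Because the open intervals of $U$ (including unbounded ones) form a basis of the order topology on $U$, there exist $a,b\in U\cup\{-\infty,+\infty\}$ with
\[
 y\in (a,b)_U\subseteq U\cap Y.
\]
Set $I:=(a,b)_U$. This is an open subset of $Y$ containing $y$, and it carries a natural linear order inherited from $U$.

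It remains to check that the subspace topology on $I$ inherited from $Y$ agrees with the order topology of the restricted order on $I$. For this I would use two standard facts already set out in the preliminaries: subspace topologies compose, so the topology on $I$ as a subspace of $Y$ equals the topology on $I$ as a subspace of $U$; and on a convex subset of a linearly ordered space (and an open interval is convex) the subspace topology coincides with the order topology of the inherited order. Combining these, $I$ is orderable, and it is the desired open orderable neighbourhood of $y$ in $Y$.

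There is no genuine obstacle here: the only thing to watch is the routine bookkeeping that the order topology and subspace topology match on the chosen interval, which is exactly what makes ``locally'' well-behaved with respect to passing to open subspaces.
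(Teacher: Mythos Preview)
Your proposal is correct and follows essentially the same approach as the paper: pick an open interval inside the ordered neighbourhood that lies within $Y$, and use that intervals (convex subsets) of a linearly ordered space carry the order topology as their subspace topology. The paper's proof is just a more compressed version of exactly this argument.
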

\begin{proof}
Let us observe that every interval in a~space with order topology is itself a linearly ordered
topological space (order topology and subspace topology coincide). Since open subspace for
each of its points contains some interval with respect to the order on a~neighbourhood,
it satisfies the definition of local orderability.
\end{proof}

This behaviour is different from the case of linearly ordered spaces,
e.g. $[0,1]\cup\{2\}$ with Euclidean topology is a~linearly ordered space
containing a~not orderable open subset  $(0,1)\cup\{2\}$. 

The property of local orderability is not hereditary in general.
\begin{przy}\label{przy.nieLokPorz} 
The set $(-1,0]\cup\bigcup_{n=1}^\infty
\left( \{\frac{1}{3n}\} \cup\bigl( \frac{1}{3n-1},\frac{1}{3n-2} \bigr) \right)$
with the topology induced from $\R$ is not locally ordered.
\end{przy}
\begin{proof}
Assume $0$ has some linearly ordered neighbourhood $U$. Clearly the connected component of~$0$
(an interval of the form $(s,0]$) has to be some interval closed at~$0$. Without loss of generality
we can assume that it consists of elements not greater than~$0$ with respect to the order on~$U$.
The rest of $U$ has countably many connected components, namely singletons and sets homeomorphic
to $(0,1)$. 
Both isolated points and intervals converge to $0$.
There exist such singleton and interval that there is no element between them, because otherwise
there will be an infinite set of intervals or singletons between some two leading to contradiction
with their convergence to $0$. Hence the singleton must be in the closure of open interval
while it is not the case.
\end{proof}

\begin{rem}
The example was known to Herrlich (\cite{herrlich}). A different idea is presented as a part
of Example~\ref{przy.nLPPT.skT2}.
\end{rem}

One can notice that the above space is a closed subset of some orderable subspace of $\R$.
It will be shown later (Proposition~\ref{prop.compsub}) that local orderability is
hereditary on compact subspaces.

At this point we know that there is no inclusion between the classes of locally ordered spaces and
generalised ordered spaces (suborderable spaces).

The following lemmas deal with separation axioms and also explain why stronger version
of local orderability property is called \emph{regular} local orderability.
\begin{lem} Every locally ordered space is $T_1$.\end{lem}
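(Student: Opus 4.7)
The plan is to show that for any point $x \in X$, the singleton $\{x\}$ is closed, by finding, for every $y \neq x$, an open neighbourhood of $y$ missing $x$. Local orderability gives exactly the tool needed: an open, orderable neighbourhood around each point.

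First I would fix distinct $x, y \in X$ and pick, by the definition of local orderability, an open neighbourhood $U$ of $y$ which carries a compatible linear order. There are two cases. If $x \notin U$, then $U$ itself is an open neighbourhood of $y$ that avoids $x$, and we are done immediately. If $x \in U$, I use the fact recalled at the start of the section that a linearly ordered topological space is Hausdorff (it is even hereditarily normal), hence in particular $T_1$. So inside $U$ we can pick an open interval $J$ with respect to the order on $U$ that contains $y$ but not $x$; since $J$ is open in $U$ and $U$ is open in $X$, the set $J$ is an open neighbourhood of $y$ in $X$ not containing $x$.

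Having produced, for every $y \neq x$, an open neighbourhood of $y$ disjoint from $\{x\}$, the set $X \setminus \{x\}$ is open, so $\{x\}$ is closed and $X$ is $T_1$.

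There is no real obstacle here: everything is automatic once one observes that on the linearly ordered patch $U$ the order topology already provides strong separation, and that $U$ being open in $X$ lets subset-open neighbourhoods be promoted to open neighbourhoods of $X$. The only thing to be careful about is the case distinction on whether $x$ belongs to the chosen orderable neighbourhood of $y$.
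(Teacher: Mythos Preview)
Your proof is correct and follows essentially the same approach as the paper's: pick an orderable open neighbourhood of one of the two points, case-split on whether the other point lies in it, and in the nontrivial case invoke the separation properties of linearly ordered spaces. The only cosmetic difference is that the paper takes the ordered neighbourhood around $x$ rather than $y$ and phrases the argument more tersely.
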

\begin{proof}
Given two distinct points $x,y$ of a locally ordered space, either $y$ does not
belong to an ordered neighbourhood of $x$ from the atlas of orders or we can use the fact
that an ordered neighbourhood of $x$ is $T_1$ itself.
\end{proof}

\begin{lem}\label{lematT3}
For a locally ordered space $X$ the following conditions are equivalent:
\begin{enumerate}[(a)]
\item $X$ is regularly locally ordered
\item $X$ is regular ($T_3$)
\item $X$ is Tychonoff ($T_{3\frac{1}{2}}$).
\end{enumerate} \end{lem}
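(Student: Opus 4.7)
The plan is to prove the cycle $(c) \Rightarrow (b) \Rightarrow (a) \Rightarrow (c)$. The first implication is the standard fact that every Tychonoff space is regular and calls for no comment.

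For $(b) \Rightarrow (a)$, I fix $x \in X$ and an orderable open neighbourhood $U$ of $x$ from an atlas of orders. Regularity produces an open set $W$ with $x \in W \subseteq \cl{W} \subseteq U$; since $U$ carries its own order topology I can find a basic open interval $V = (a,b)_U$ about $x$ with $V \subseteq W$ (allowing the half-unbounded forms $(\leftarrow, b)_U$ or $(a,\rightarrow)_U$ when $x$ is an extremum of $U$). The inclusion $\cl{V}^X \subseteq \cl{W} \subseteq U$ forces $\cl{V}^X = \cl{V}^U$, which by the remark recorded in the paper that closures of convex sets are convex is a convex subset of $U$. The substantive step is then to verify that every convex subset $K$ of an orderable space is itself orderable under the restricted order, i.e.\ that the subspace topology on $K$ coincides with the order topology generated by its restricted order. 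This I would prove by comparing basic open intervals on either side: convexity ensures that any $\alpha \in U \setminus K$ lies either below every element of $K$ or above every element of $K$, so that each intersection $(\alpha,\beta)_U \cap K$ collapses to an interval of $K$ (possibly unbounded, empty, or all of $K$).

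For $(a) \Rightarrow (c)$, I separate a point $x$ from a closed set $F \subseteq X \setminus \{x\}$ by a continuous $[0,1]$-valued function. Choose a neighbourhood $V$ of $x$ with $\cl{V}$ orderable. Since orderable spaces are hereditarily normal (recorded earlier) and $T_1$, they are Tychonoff, so there is a continuous $f \colon \cl{V} \to [0,1]$ with $f(x) = 1$ and $f$ vanishing on the closed set $F' = (F \cap \cl{V}) \cup (\cl{V} \setminus V)$. Define $g \colon X \to [0,1]$ by $g = f$ on $\cl{V}$ and $g = 0$ on $X \setminus V$; this is well-defined because $f$ already vanishes on the boundary $\cl{V} \setminus V \subseteq F'$, and continuity follows from a short check on preimages, using that $\cl{V}$ is closed in $X$ and that $\{f > 0\}$ lies inside the open set $V$. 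Then $g(x) = 1$ and $g \equiv 0$ on $F$, giving the Tychonoff property.

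The main obstacle is the lemma invoked in $(b) \Rightarrow (a)$ --- that a convex subset of an orderable space inherits the order topology from its restricted order --- since all other steps are routine extension or direct invocation of results already collected in the paper.
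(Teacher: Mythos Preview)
Your proof is correct and follows essentially the same route as the paper: the same cycle of implications, the same use of regularity to trap a basic interval whose closure sits inside an orderable chart for $(b)\Rightarrow(a)$, and the same strategy of producing a separating function on the orderable closure and extending it by a constant for $(a)\Rightarrow(c)$. Your $(a)\Rightarrow(c)$ is in fact slightly cleaner---you separate $x$ directly from $(F\cap\cl{V})\cup(\cl{V}\setminus V)$ and invoke the pasting lemma on the closed cover $\{\cl{V},\,X\setminus V\}$, whereas the paper first passes through a $T_3$-style separation before picking the function---and your explicit verification that convex subsets inherit the order topology fills in a step the paper leaves implicit.
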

\begin{proof}
The implication (c)$\Rightarrow$(b) is simple and well known.\smallskip

(a)$\Rightarrow$(c) Fix $A$, a~closed subset of a regularly locally ordered space $X$,
and a~point $x\in X\setminus A$. Let $U$ be a~neighbourhood of $x$ such that $\cl{U}$
is orderable. Linearly ordered spaces are $T_3$ (even $T_5$), so we can find
disjoint open sets $V$ and $W$ in $\cl{U}$ such that $x\in V$ and $A\cap U \subseteq W$.
Note that $\cl{U} \setminus W$ is closed in $X$.

The set $V\cap U$ is then a~neighbourhood of $x$ in $X$
disjoint with $X\setminus (\cl{U} \setminus W)$, a~neighbourhood of $A$.
We can pick any function on $\cl{U}$ separating $x$ and $\cl{U}\setminus V$ (since 
lineraly ordered spaces are Tychonoff) and extend it with
a constant on $X\setminus\cl{U}$. 
\smallskip

(b)$\Rightarrow$(a) Fix $x$, a point in a locally ordered $T_3$ space. It has an orderable
neighbourhood $U$. Pick an open set $V$, such that $x\in V\subseteq \cl{V}\subseteq U$.
If we find then an open interval inside $V$ containing $x$, its closure will be
contained in $U$, and hence will be an interval which is orderable.
\end{proof}

\begin{wnioch}Every locally ordered $T_3$ space is completely Hausdorff.\end{wnioch}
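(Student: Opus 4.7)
The plan is to deduce this directly from Lemma~\ref{lematT3}. That lemma gives the equivalence (b)$\Leftrightarrow$(c), so a locally ordered $T_3$ space is automatically Tychonoff ($T_{3\frac{1}{2}}$). Combined with the fact (established in the preceding lemma) that every locally ordered space is $T_1$, singletons are closed, and we can invoke the Tychonoff property to separate any point from any closed set disjoint from it by a continuous real-valued function.

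More concretely, given two distinct points $x, y$ in a locally ordered $T_3$ space $X$, the set $\{y\}$ is closed because $X$ is $T_1$, and $x \notin \{y\}$. By Lemma~\ref{lematT3}, $X$ is Tychonoff, so there exists a continuous function $f : X \to [0,1]$ with $f(x) = 0$ and $f(y) = 1$. This is exactly the definition of completely Hausdorff given in the Notation subsection.

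There is no real obstacle: the work has already been done inside the proof of Lemma~\ref{lematT3}, where the separating function was explicitly constructed by extending a Tychonoff-separating function on the orderable closure $\cl{U}$ by a constant outside $\cl{U}$. The corollary is essentially a restatement of the $T_{3\frac{1}{2}}$ conclusion once one notes that $T_1$ lets us take the ``closed set'' to be a second point.
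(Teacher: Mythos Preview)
Your proposal is correct and matches the paper's intent: the corollary is stated without proof immediately after Lemma~\ref{lematT3}, so the paper's own ``proof'' is precisely the observation that $T_3$ implies $T_{3\frac{1}{2}}$ for locally ordered spaces, from which completely Hausdorff follows at once. Your write-up simply spells out this implication (using $T_1$ to make a point into a closed set), which is exactly what the paper leaves implicit.
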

The above implication cannot be reversed.

For locally ordered spaces there are no other implications between low
separation axioms (namely: $T_1$, $T_2$, $\TcompH$, semiregularity and complete Hausdorff)
than those valid for topological spaces in general.
The table below lists the sufficient counterexamples:
\begin{center}\begin{tabular}{|r|c|c|c|c|c|}
\hline
Example \rule{0pt}{2.5ex}& \makebox[2em]{\ref{przy.nieT2}}
&\makebox[2em]{\ref{przy.nieUr}}&\makebox[2em]{\ref{przy.nCompH}}&
\makebox[2em]{\ref{przy.nielc}}&\makebox[2em]{\ref{przy.nieT3}}\\
\hline Hausdorff \rule{0pt}{2.3ex}& 		0 & 1 & 1 & 1 & 1 \\
\hline Urysohn \rule{0pt}{2.2ex}&			0 & 0 & 1 & 1 & 1 \\
\hline completely Hausdorff \rule{0pt}{2.2ex}&	0 & 0 & 0 & 1 & 1 \\
\hline semiregular \rule{0pt}{2.2ex}&		0 & 1 & 1 & 0 & 1 \\
\hline $T_3$ \rule{0pt}{2.2ex}&				0 & 0 & 0 & 0 & 0 \\ \hline
\end{tabular}\end{center} 
All mentioned examples are connected locally ordered topological spaces.

Basing on the Example~\ref{przy.nielc} one can modify spaces from Examples~\ref{przy.nieUr}
and \ref{przy.nCompH} to be not semiregular.

Since both local orderability and $T_3$ are hereditary on open subspaces,
so is regular local orderability. Further, we will show that
regular local orderability is also hereditary on connected (Lemma~\ref{lem.connected})
and compact (Proposition~\ref{prop.compsub}) subsets.

\begin{rem}[On higher separation axioms]\mbox{ }
\begin{enumerate}[\ 1.]
\item A regularly locally ordered space need not to be normal
(see Examples \ref{przy.rat} and \ref{deska}).

\item The long line (see e.g. \cite{counter}) is an example proving that a $T_5$
locally ordered space need not to be $T_6$ (not every closed set is $G_\delta$).

\item A $T_6$ linearly ordered space is first-countable and the opposite implication
is not true (e.g. the long line). This fact easily implies that a $T_6$ locally ordered space
is first-countable.

\item Second-countable locally ordered space may not be even Hausdorff (Example~\ref{przy.nieT2}).
First-countable reagularly locally ordered space may not be normal (Example~\ref{przy.rat}).
For regularly locally ordered spaces the second axiom of countability is strong assumption
implying the decomposition described in Corollary~\ref{wn-2AP}.

\item It is also known that linearly ordered spaces which are $T_6$ may not be metrizable.
As an example one can take the set $[0,1]\times[0,1]$ with the lexicographic order
(see \cite{counter}).
\end{enumerate}
\end{rem}

Lutzer (\cite{lutz}) proved that a linearly ordered space $X$ is metrizable if and only if
the diagonal ($\{(x,y)\in X\!\times\! X:x=y\}$) is a $G_\delta$ subset of the product space $X\times X$.
The Example~\ref{przy.rat} shows that such condition is not sufficient
for regularly locally ordered spaces.
It obviously implies local metrizability, hence, keeping in mind well known metrization theorem
by Smirnov (\cite{smirnov}, \cite[5.4.A]{engel}), we can formulate a following
characterisation of metrizability for locally ordered spaces.
\begin{thm}A locally ordered space is metrizable if and only if
it is paracompact and has a $G_\delta$ diagonal.\end{thm}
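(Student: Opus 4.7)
The plan is to derive this theorem by stringing together the two metrization results already cited in the paragraph just before it: Lutzer's characterisation of metrizable linearly ordered spaces via $G_\delta$ diagonal, and Smirnov's theorem saying that metrizability is equivalent to paracompactness plus local metrizability.

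The forward implication is the standard part. If $X$ is metrizable with compatible metric $d$, then $X$ is paracompact by Stone's theorem, and the diagonal is the countable intersection $\bigcap_{n\in\N}\{(x,y)\in X\times X:d(x,y)<1/n\}$, a $G_\delta$ set in $X\times X$.

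For the reverse implication, I would assume that $X$ is locally ordered, paracompact (which in the convention of \cite{engel} includes Hausdorff, giving also $T_4$ and hence regular local orderability by Lemma~\ref{lematT3}), and has a $G_\delta$ diagonal. Pick any point $x\in X$ and an orderable open neighbourhood $U$ of $x$. The $G_\delta$ diagonal property is trivially hereditary: if $\Delta_X=\bigcap_{n\in\N}G_n$ with each $G_n$ open in $X\times X$, then $\Delta_U=(U\times U)\cap\Delta_X=\bigcap_{n\in\N}\bigl((U\times U)\cap G_n\bigr)$ is a $G_\delta$ set in $U\times U$. Hence $U$, being a linearly ordered space with $G_\delta$ diagonal, is metrizable by Lutzer's theorem \cite{lutz}. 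This shows that $X$ is locally metrizable. Smirnov's theorem then gives that $X$ itself is metrizable.

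There is not really a hard step here, so the main thing to double-check is just the conventions: that paracompactness is taken to include the Hausdorff axiom (so that Smirnov's theorem applies directly and so that $X$ is automatically regular, which brings the locally ordered structure under the scope of Lemma~\ref{lematT3}). Everything else is bookkeeping: verifying that the $G_\delta$ diagonal passes to the orderable neighbourhoods from the atlas of orders, and then invoking the two cited theorems.
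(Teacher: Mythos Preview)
Your proposal is correct and follows exactly the approach the paper indicates in the paragraph immediately preceding the theorem: the $G_\delta$ diagonal passes to the orderable neighbourhoods, Lutzer's theorem makes them metrizable, and Smirnov's theorem finishes the reverse implication (the forward one being standard). The paper gives no separate proof beyond that paragraph, so your write-up is simply a fleshed-out version of the intended argument.
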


\section{Connectedness} 
For linearly ordered topological spaces connectedness implies local connectedness and local compactness.
This is not true in general for locally ordered spaces (see Examples~\ref{przy.nieT3} and \ref{przy.nielc}).

\begin{lem}\label{lem.loccon}\mbox{ }\nopagebreak 
\begin{enumerate}[a)]
\item A connected regularly locally ordered space is locally connected.
\item Locally connected Hausdorff locally ordered space is locally compact (and hence regular).
\end{enumerate} \end{lem}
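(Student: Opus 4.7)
For part (a), my plan is as follows. Fix $x \in X$ and an open neighbourhood $W$ of $x$, and use regular local orderability together with the technique from the proof of the implication (b)$\Rightarrow$(a) of Lemma~\ref{lematT3} to pick an open $U$ with $x\in U$ and $\overline{U}$ orderable, and an open interval $J$ in $\overline{U}$'s order containing $x$ whose $X$-closure $\overline{J}$ is a closed interval $[\alpha,\beta]_{\overline{U}}$ contained in $U\subseteq W$. The goal is then to show that $\overline{J}$ is connected, which would give a connected neighbourhood of $x$ inside $W$. Supposing $\overline{J}$ is disconnected, I would analyse the resulting decomposition: as a closed interval of the linearly ordered $\overline{U}$, $\overline{J}$ can only be disconnected via a \emph{gap} (elements $c<d$ in $\overline{J}$ with $(c,d)_{\overline{U}}=\emptyset$) or a Dedekind cut without an element. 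In the gap case the two pieces $[\alpha,c]_{\overline{U}}$ and $[d,\beta]_{\overline{U}}$ are clopen in $\overline{J}$ and closed in $X$, and at interior points their basic $\overline{U}$-neighbourhoods sit inside $U$ and hence are $X$-open; the core of the proof is arguing that the endpoints $\alpha,\beta\in U$ are controlled enough that the whole closed pieces become $X$-clopen, producing a nontrivial $X$-clopen set and contradicting connectedness of $X$.

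For part (b) I would reason more directly. Take $x\in X$, pick an orderable open neighbourhood $U$ of $x$ by local orderability, and a connected open neighbourhood $V\subseteq U$ of $x$ by local connectedness. As a connected subset of the linearly ordered space $U$, $V$ is convex and, being connected, is a ``continuous'' interval --- dense in itself and Dedekind complete as a subset of $U$. Choose $p,q\in V$ with $p<x<q$ in $U$'s order and set $K=[p,q]_U$. Being a closed bounded subinterval of a continuous orderable space, $K$ is compact in its order topology, and this order topology agrees with its subspace topology from $X$ by the classical fact for intervals. Since $(p,q)_U$ is an $X$-open subset of $K$ containing $x$, $K$ is an $X$-neighbourhood of $x$, and being compact in the Hausdorff space $X$ it is also closed. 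So $x$ has a compact closed neighbourhood, witnessing local compactness; the parenthetical ``hence regular'' follows from the classical fact that every locally compact Hausdorff space is regular.

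The genuinely hard step is in (a): promoting the local clopen decomposition of $\overline{J}$ to a global clopen partition of $X$. The endpoints $\alpha,\beta$ need not have predecessors/successors in $\overline{U}$, and the pieces $[\alpha,c]_{\overline{U}}$, $[d,\beta]_{\overline{U}}$ can fail to be $X$-open at those endpoints just on the basis of the chart $\overline{U}$. Resolving this is where the connectedness of $X$ interacts with the $T_3$ axiom most essentially --- the same interaction that fails in the non-$T_3$ Example~\ref{przy.nielc} and allows local connectedness to fail there. Part (b), by contrast, is essentially bookkeeping once one translates connectedness of $V$ into the continuous-interval structure.
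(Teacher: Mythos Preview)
Your argument for (b) is correct and essentially the same as the paper's: once you have a connected ordered neighbourhood $V$, closed subintervals of $V$ are compact, and picking one whose interior contains $x$ gives the required compact neighbourhood. (One minor edge case you skip --- $x$ might be an endpoint of $V$ --- is handled by taking a one-sided interval, and the paper is equally brief here.)

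For (a), however, there is a genuine gap, and it is exactly the one you flag without resolving. Decomposing $\overline{J}=[\alpha,\beta]$ at a jump $c<d$ gives pieces $[\alpha,c]$ and $[d,\beta]$ that are indeed closed in $X$, but neither is open in $X$ at the artificial endpoints $\alpha,\beta$ unless those points happen to be isolated on one side in $\overline{U}$, which you have no reason to expect. You correctly diagnose this as the crux, but then only gesture at ``connectedness of $X$ interacting with $T_3$'' rather than supplying an argument. As it stands, your pieces simply do not produce a clopen subset of $X$, and no contradiction with connectedness of $X$ follows.

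The paper sidesteps this by arguing the contrapositive and seeking a set $V$ that is clopen in the \emph{entire ordered chart} $U$ --- not merely in a closed subinterval --- and that lies inside the regularly-chosen $W$ with $\overline W\subseteq U$. Such a $V$ is automatically open in $X$ (open in $U$, $U$ open in $X$) and closed in $X$ (since $\overline{V}^X\subseteq\overline{W}^X\subseteq U$ forces $\overline{V}^X=\overline{V}^U=V$). Its existence uses that $x$ has \emph{no} connected neighbourhood, which forces jumps or gaps in $U$ arbitrarily close to $x$ on at least one side; two such cuts inside $W$ trap a nonempty $U$-clopen convex piece between them, and that is your $V$. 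This is the missing idea: work with clopen sets of the chart itself rather than with a truncated interval whose endpoints you cannot control.
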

\begin{proof}
a) Assume that space $X$ is locally ordered, $T_3$ and not locally connected. We will prove
that then $X$ is not connected.

Consider $x\in X$, a~point without a~connected, ordered neighbourhood (if there is no such point,
the space is locally connected, since connectedness of an ordered space implies its local connectedness).
Let $U$ be an ordered neighbourhood of $x$. By regularity, there exists open set $W$, such that
$x\in W\subseteq \overline W \subseteq U$. Without loss of generality we can assume
that $W$ is an interval in $U$.
Since $x$ has no connected neighbourhood, $W$ contains a~nonempty proper
subset $V$ which is both closed and open in $U$. 
Since closure of $V$ is contained in $U$, it equals the closure of $V$ in $U$.
Hence closed sets $V$ and $X\setminus V$ separate space $X$.\smallskip

b) Denote given locally connected locally ordered Hausdorff space by $X$.
Fix point $x$ and a~closed set $A$ not including $x$. There exists an ordered and connected
neighbourhood $C$ of $x$. Any closed interval in $C$ is compact and therefore closed
in the whole space. It suffices to pick from its interior any closed interval containing $x$.
\end{proof}

Hausdorff axiom is important, since $\R$ with doubled origin (Example~\ref{przy.nieT2}) is
connected and locally connected but definitely not $T_3$.

Now let us define an important class of locally ordered spaces, a generalisation of the circle.
\begin{deff} A~topological space obtained from a~compact and connected linearly ordered space
(containing at least two points) by identification of the smallest and the greatest element is
called a \emph{loop-ordered topological space}.
\end{deff}

Assuming connectedness in the definition of loop-ordered space is important, because otherwise
after the identification of the endpoints we would still obtain an orderable space.

Proofs of the following simple properties of loop-ordered spaces are left to the reader.
\begin{prop} Let $X$ be a loop ordered space. Then
\begin{enumerate}[a)]
\item $X$ is compact and connected regularly locally ordered space,
\item $X$ is hereditarily normal ($T_5$),
\item $X$ cannot be homeomorphicaly embedded in a linearly ordered space,
\item for any $x\in X$ the subspace $X\setminus\{x\}$ is connected, orderable and not compact.
\end{enumerate}
\end{prop}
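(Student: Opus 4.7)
My plan is to write $X = L/(0\sim 1)$ where $L$ is the underlying compact connected linearly ordered space, with smallest element $0$ and greatest element $1$, and let $q\colon L\to X$ denote the quotient map, setting $p := q(0) = q(1)$. The graph of the equivalence relation is $\Delta_L\cup\{(0,1),(1,0)\}$, which is closed in $L\times L$; hence $q$ is a continuous closed surjection whose only non-singleton fibre is $\{0,1\}$, and $X$ is compact Hausdorff. I also use that $L$ itself is hereditarily normal.

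For (a), compactness and connectedness of $X$ are inherited from $L$ via $q$. Local orderability at any $x\neq p$ is witnessed by an open interval neighbourhood of $x$ in $L$ omitting $\{0,1\}$, on which $q$ is a homeomorphism onto its image. For $x=p$, I take $U=q([0,a)\cup(b,1])$ with $0<a<b<1$ and linearly order its closure $\cl{U}=q([0,a]\cup[b,1])$ by going around the loop: the points of $[b,1]$ first with their $L$-order, with $1$ identified to $0$, followed by the points of $[0,a]$ with their $L$-order. A routine check that the associated order topology coincides with the subspace topology from $X$ shows that $U$ has orderable closure, so $X$ is regularly locally ordered, and hence $T_3$ by Lemma~\ref{lematT3}.

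For (d), non-compactness of $X\setminus\{x\}$ is immediate: $X$ is Hausdorff, connected, with at least two points, so no point is isolated and $X\setminus\{x\}$ is dense but not closed, hence not compact. When $x=p$, the space $X\setminus\{p\}$ is homeomorphic to the open interval $(0,1)_L$, which is connected and orderable. When $x=q(y)$ with $y\in(0,1)_L$, the space $X\setminus\{x\}=q([0,y))\cup q((y,1])$ is the union of two connected sets sharing $p$, and the ``cut and straighten'' recipe from (a) (placing $q((y,1))$ first with its $L$-order, then $p$, then $q((0,y))$) furnishes a compatible linear order. From (d) one deduces (c): every point of $X$ is a non-cut-point, yet a compact connected subset of a linearly ordered space is a closed interval, which has exactly two non-cut-points. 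Since any two-point LOTS would be discrete and hence disconnected, $L$ is infinite, $X$ has at least three points, and no such embedding can exist.

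The least routine part is (b). My approach is to lift: given separated $A,B\subseteq X$, set $A'=q^{-1}(A)$ and $B'=q^{-1}(B)$. Because $q$ is a closed surjection we have $\cl{A'}=q^{-1}(\cl{A})$, so $A',B'$ remain separated in the hereditarily normal space $L$ and thus admit disjoint open separators $U',V'\subseteq L$. The main obstacle is that $U',V'$ must be $q$-saturated in order for their images to be open in $X$. Since $A'$ and $B'$ are already saturated, the pair $\{0,1\}$ lies wholly in $A'$, wholly in $B'$, or in neither: in the first case $\{0,1\}\subseteq U'$ makes $U'$ saturated and $V'\cap\{0,1\}=\emptyset$ (being disjoint from $U'$), so $V'$ is saturated too; the second case is symmetric; and in the third case $U'\setminus\{0,1\}$ and $V'\setminus\{0,1\}$ are still disjoint open (using $T_1$-ness of $L$) saturated supersets of $A'$ and $B'$. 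The images $q(U'),q(V')$ then form the required disjoint open separators of $A$ and $B$ in $X$.
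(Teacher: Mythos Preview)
The paper does not actually prove this proposition; it states that the proofs ``are left to the reader.'' So there is nothing to compare your approach against, and your argument is essentially correct and would serve well as the omitted proof.

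One small slip in part (b): the equality $\overline{A'}=q^{-1}(\overline{A})$ that you attribute to $q$ being a closed surjection is false in general. For instance, with $A=q\bigl((0,c)_L\bigr)$ for some $0<c<1$ you get $\overline{A'}=[0,c]_L$, whereas $q^{-1}(\overline{A})=[0,c]_L\cup\{1\}$ because $p\in\overline{A}$. What you actually need, however, is only the inclusion $\overline{A'}\subseteq q^{-1}(\overline{A})$, and that follows from mere continuity of $q$. This inclusion already gives
\[
\overline{A'}\cap B'\ \subseteq\ q^{-1}(\overline{A})\cap q^{-1}(B)\ =\ q^{-1}(\overline{A}\cap B)\ =\ \emptyset,
\]
and symmetrically $A'\cap\overline{B'}=\emptyset$, so $A',B'$ are indeed separated in $L$. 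The rest of your saturation argument (handling the three cases for where $\{0,1\}$ sits relative to $A'$ and $B'$) is correct as written.
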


\begin{rem} Due to the topological characterisation of the unit interval,
every metrizable loop-ordered space is homeomorphic to the unit circle in $\R^2$.
It can be also deduced from \cite[6.3.2c.]{engel}.\end{rem}

The follwoing simple fact would be useful in the future description of locally ordered spaces.
\begin{lem}\label{lem.clopl} In a locally ordered space every loop-ordered subspace
is open.\end{lem}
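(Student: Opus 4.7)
The plan is to fix $x\in L$ and locate an $X$-open interval around $x$ sitting inside $L$. Two ingredients will be available: since $X$ is locally ordered, $x$ has an orderable neighbourhood $U$ in $X$ with some compatible order $<_U$; and since $L$ is loop-ordered, unpacking the definition of $L$ as a quotient of a compact connected linearly ordered space $K$ shows that $x$ has a neighbourhood basis in $L$ consisting of \emph{open arcs} --- open subsets of $L$ homeomorphic to an open interval of $K$, whose closures in $L$ are closed arcs with $x$ as an interior point. (For the identified endpoint this uses a ``wrap-around'' neighbourhood assembled from two short half-open intervals at the two ends of $K$; for every other point it is just an ordinary open subinterval of $K$.)

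I would first shrink within this basis to choose an open arc $V$ in $L$ containing $x$ with $\overline{V}^L\subseteq U$. The key task is then to show that $V$, viewed inside $U$, is exactly an open $<_U$-interval. As a compact connected subset of the linearly ordered space $U$, the closed arc $\overline{V}^L$ is order-convex with respect to $<_U$; and by the fact recalled in the preliminaries that on compact subsets the subspace topology and the order topology from the inherited order coincide, the topology of $\overline{V}^L$ agrees with the order topology coming from the restriction of $<_U$. Compactness then supplies a least element $a$ and a greatest element $b$ of $\overline{V}^L$ in this order, and convexity in $U$ yields $\overline{V}^L=[a,b]_U$.

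To finish I would invoke the uniqueness (up to reversal) of compatible orders on a connected orderable space, recalled just before Lemma~\ref{lematT3}: the intrinsic arc-order on $\overline{V}^L$ and the order on $\overline{V}^L$ inherited from $<_U$ both induce the same topology on a connected orderable set, hence they agree up to reversal, so the two intrinsic arc-endpoints of $\overline{V}^L$ are precisely $a$ and $b$. Removing them from $\overline{V}^L=[a,b]_U$ gives $V=(a,b)_U$, which is open in $U$ and therefore in $X$, contains $x$, and lies in $L$; this is exactly the required neighbourhood. The main obstacle is reconciling the ``intrinsic'' arc structure on $\overline{V}^L$ coming from the loop-ordered definition of $L$ with the ``extrinsic'' order structure inherited from $U$, and both the subspace-topology-on-compact-subsets theorem and the uniqueness-of-compatible-orders theorem from the preliminary subsection are needed for this bridge.
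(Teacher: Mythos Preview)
Your argument is correct, but it is considerably more elaborate than the paper's. The paper proceeds more directly: once a connected $L$-neighbourhood $W$ of $x_0$ is placed inside the ordered neighbourhood $U$, it is automatically convex in $U$; then the single observation that $x_0$ is a cutpoint of $W$ (being in the interior of the arc) forces $x_0$ not to be an endpoint of this convex set, so some open $<_U$-interval around $x_0$ already sits inside $W\subseteq L$. No identification of the neighbourhood with a specific interval $[a,b]_U$ is attempted, and neither the compact-subspace theorem nor the uniqueness-of-compatible-orders theorem is invoked.

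Your route buys an exact description $V=(a,b)_U$, at the cost of importing two nontrivial preliminary results and a regularity-type shrinking step in $L$; the paper's route trades that precision for economy, needing only the elementary ``connected $\Rightarrow$ convex'' fact together with the cutpoint observation. Both are valid; the paper's version simply shows that the heavier machinery you reached for is not required here.
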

\begin{proof}
Fix a point $x_0$ in a loop ordered subspace $L$. If $U$ is an ordered neighbourhood of $x_0$,
then some connected neighbourhood of $x_0$ in $L$ has to be contained in $U$.
A connected subset of $U$ has to be convex, and $x_0$ is not its endpoint
for it is a cutpoint. Hence there exists an open interval containing $x_0$
and enclosed in $L$. Since we can find such interval for any $x_0$ in $L$,
the loop-ordered subset is open.
\end{proof}

Now we can formulate the main result of the paper, namely classification theorem
for connected locally ordered spaces.
\begin{thm}\label{thm.class}
If $X$ is a~connected regularly locally ordered topological space,
then $X$ is either an~orderable space or a~loop-ordered space.
\end{thm}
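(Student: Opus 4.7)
The plan is to produce a chart structure where each chart is a compact closed-interval LOTS, and then classify $X$ by an orientation-propagation argument. By Lemma~\ref{lem.loccon}, $X$ is locally connected and locally compact, so every point has a basis of connected open neighborhoods $U$ whose compact orderable closure $\overline U$ is a compact connected LOTS --- i.e., a closed order-interval with a least and a greatest element. I would fix an atlas $\{(U_\alpha, <_\alpha)\}$ of such charts, refined (using local connectedness) so that every nonempty pairwise intersection is connected. The key compatibility fact is that if $U_\alpha \cap U_\beta \ne \emptyset$, then this intersection is a convex subinterval of both charts and the two induced orders on it either coincide or are exact reverses, which is essentially the uniqueness of orientation on a connected LOTS (cf.\ Theorem~\ref{thm.con-ord}).

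Next I would fix a base chart $U_{\alpha_0}$ with one of the two orientations and propagate to each neighboring chart by the unique compatible choice. By connectedness of $X$, every chart in the atlas receives a candidate orientation, leading to two cases. In the \textbf{acyclic case}, the candidate orientations are globally well-defined on $X$. Define $x <_X y$ to mean there is a finite chain of consistently oriented overlapping charts along which $x$ precedes $y$; transitivity and totality follow from consistency of orientations, and antisymmetry from linearity of each $<_\alpha$. The order topology of $<_X$ agrees with the original because open intervals for each $<_\alpha$ remain open intervals for $<_X$ and form a local basis, so $X$ is orderable.

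In the \textbf{cyclic case}, some closed chain of charts propagates to itself with opposite orientation. A minimal such chain is necessarily finite; the open union $W = \bigcup_i U_{\alpha_i}$ has closure equal to the compact finite union $\bigcup_i \overline{U_{\alpha_i}}$, and using the interval geometry of consecutive charts one shows $\overline W = W$, making $W$ clopen and hence equal to $X$ by connectedness. Thus $X$ is compact. Now pick a point $p$ lying in only one chart of the cycle; then $X \setminus \{p\}$ is open (so locally ordered), connected, and carries no obstructing cycle --- the only one passed through $p$ --- so by the acyclic argument it is orderable. Its order-theoretic endpoints both correspond to $p$ in $X$, realising $X$ as a compact connected LOTS with its two endpoints identified, i.e.\ a loop-ordered space.

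The main obstacle is the cyclic case, and specifically the step asserting that a genuine orientation inconsistency is detected by a \emph{finite} chain of charts --- this is what secures compactness of $X$ and the finiteness of $W$. The justification rests on the observation that once propagation revisits any chart with reversed orientation, an initial segment of the revisiting chain already realises the inconsistency, and that the convex-interval geometry of overlapping compact closed-interval charts constrains any such revisit to occur along a finite chain. Handling transfinite phenomena reminiscent of the long line requires some care here, but the essential finiteness is forced the moment monodromy appears.
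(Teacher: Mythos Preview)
Your dichotomy between the acyclic and cyclic cases rests on the wrong invariant. You define the cyclic case as the existence of a closed chain of charts along which the propagated orientation returns \emph{reversed}, i.e., nontrivial orientation monodromy. But the circle --- the prototypical loop-ordered space --- has \emph{trivial} orientation monodromy: traversing a chain of consistently oriented arcs once around $S^1$ brings you back to the starting arc with the \emph{same} orientation, not the opposite one. Hence the circle (and in fact every loop-ordered space) lands in your acyclic case, where you then claim $X$ is orderable. The asserted antisymmetry of $<_X$ is exactly what fails there: with a globally consistent orientation on $S^1$, for any two distinct points $x,y$ there is a chain along which $x$ precedes $y$ and another, going the other way around, along which $y$ precedes $x$. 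Antisymmetry does not follow ``from linearity of each $<_\alpha$''; that is a purely local statement. The genuine obstruction to orderability is not orientation reversal but whether some chain of charts \emph{closes up}, which is detected by the intersection of the last chart with the first having a second connected component --- not by monodromy. Indeed, since the theorem itself says $X$ is either orderable or loop-ordered, and both are orientable, your cyclic case is vacuous as stated. (A secondary issue: the claim that one can refine the atlas so that all pairwise intersections are connected is exactly what fails on a loop before you know it is a loop, so it too needs independent justification.)

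For comparison, the paper never invokes orientation monodromy. It first proves a two-chart lemma (Lemma~\ref{lem.union}): the union of two open connected orderable sets is orderable when their intersection is connected, and loop-ordered when the intersection has two unbounded components. The main proof then splits on whether some point $x_0$ is an endpoint of its connected ordered neighbourhood. If so, chains of charts anchored at $x_0$ can never close into a loop (an endpoint has one-point boundary), and one builds a global order by iterating Lemma~\ref{lem.union}. If no such endpoint exists, a cut-point analysis --- does removing some point leave $X$ connected? --- decides between the loop-ordered and orderable outcomes. That two-component-intersection criterion, together with the endpoint/cut-point dichotomy, is the structural idea your argument is missing.
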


Before we prove it let us start with the following simpler case.
\begin{lem}\label{lem.union}
If a~connected Hausdorff space can be covered by two open connected
and orderable sets then it is either an orderable space or a loop-ordered space.
\end{lem}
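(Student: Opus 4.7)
\emph{Proof plan.} Denote by $U_1$ and $U_2$ the two open connected orderable sets covering $X$, with compatible linear orders $<_1$ and $<_2$, and set $V=U_1\cap U_2$. Connectedness of $X$ forces $V\neq\emptyset$; since $V$ is open and convex in each $U_i$, every connected component $V_0$ of $V$ is an open interval with respect to both $<_1$ and $<_2$. By the uniqueness of compatible orders on a connected orderable space (recalled in the preliminaries), the two restricted orders on $V_0$ either coincide or are reverses of each other. The plan is to use Hausdorffness as a rigidity tool to bound the number of components of $V$ by two, and then to assemble $X$ explicitly as orderable (one component) or loop-ordered (two components).

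The rigidity step reads as follows. Suppose the $<_1$-supremum of some component $V_0$ is a point $b\in U_1$; then $b\in U_1\setminus U_2$, and any net in $V_0$ that is $<_1$-increasing to $b$ converges to $b$ in $X$. The same net is $<_2$-monotone in $V_0\subseteq U_2$; if it were $<_2$-convergent in $U_2$, its limit would lie in $U_2\setminus U_1$, contradicting Hausdorffness. Hence $V_0$ is $<_2$-unbounded on the corresponding side (the $<_2$-top if the orders agree on $V_0$, the $<_2$-bottom if they disagree). The three analogous statements hold at the $<_1$-infimum and with the roles of $U_1,U_2$ exchanged. In particular, if three components $V_1<_1 V_2<_1 V_3$ existed, then $V_2=(\alpha,\beta)_{U_1}$ would have both $\alpha,\beta\in U_1$, hence would be $<_2$-unbounded on both sides, forcing $V_2=U_2$; but then $U_2\subseteq U_1$ and $V=U_2=V_2$ has only one component — contradiction. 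So $V$ has at most two components.

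Assume $V$ is connected. The trivial subcases $V=U_i$ give $X=U_j$, already orderable. Otherwise the rigidity puts (after possibly reversing $<_1$ or $<_2$) $V=(a,\rightarrow)_{U_1}=(\leftarrow,d)_{U_2}$ for some $a\in U_1$ and $d\in U_2$, with $<_1$ and $<_2$ agreeing on $V$ (a mismatch would regenerate the Hausdorff contradiction). Concatenating $(\leftarrow,a]_{U_1}$, $V$, and $[d,\rightarrow)_{U_2}$ with the inherited orders defines a linear order on $X$ whose order topology matches the original topology; the only nontrivial verification is local, at the seams $a$ and $d$.

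Assume $V=V_1\sqcup V_2$ with $V_1<_1 V_2$. Rigidity, combined with the observation that two disjoint open intervals of $U_2$ cannot both be $<_2$-unbounded on the same side, forces (after orientation)
\[
V_1=(\leftarrow,b_1)_{U_1}=(c_1,\rightarrow)_{U_2},\qquad V_2=(a_2,\rightarrow)_{U_1}=(\leftarrow,d_2)_{U_2},
\]
with $b_1,a_2\in U_1$, $c_1,d_2\in U_2$, and the orders agreeing on each $V_i$, so that $U_1\setminus V=[b_1,a_2]_{U_1}$ and $U_2\setminus V=[d_2,c_1]_{U_2}$. Splitting $b_1$ into two new points $b_1^-,b_1^+$ and taking the concatenation
\[
b_1^-\,<\,(b_1,a_2]_{U_1}\,<\,V_2\,<\,[d_2,c_1]_{U_2}\,<\,V_1\,<\,b_1^+
\]
produces a dense, order-complete linearly ordered set $L$ with least element $b_1^-$ and greatest element $b_1^+$, i.e.\ a compact connected linearly ordered space; then $X$ is homeomorphic to $L/(b_1^-\sim b_1^+)$, hence loop-ordered. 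The main obstacle will be the bookkeeping inside the rigidity argument — ruling out the various ``twisted'' configurations of orientations and endpoint positions — and the patient local verification at every seam that the order topology of the constructed $X$ (respectively, the quotient topology on $L$) coincides with the given topology; both checks rest solely on the openness of $U_1,U_2$ in $X$ and on Hausdorffness.
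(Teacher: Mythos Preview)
Your proof is correct and follows essentially the same architecture as the paper's: analyse the connected components of $U_1\cap U_2$ as open intervals in each order, show that a component bounded on both sides in one order would force one of $U_1,U_2$ to contain the other (so at most two components survive), then glue in the one-component case and split a point in the two-component case to exhibit a loop. The only notable technical difference is in that ``rigidity'' step: the paper argues via compactness---a two-sided bounded component has compact closure in the connected $U_1$, hence is closed in $X$, hence clopen in $U_2$---whereas you argue via order-completeness of the connected $U_2$ together with uniqueness of limits in a Hausdorff space; these are equivalent uses of the connectedness hypothesis, and your more explicit endpoint/orientation bookkeeping in the two-component case is slightly more detailed than the paper's, which simply invokes the one-component case twice after splitting a point of $U_1\setminus U_2$.
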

\begin{proof}
Consider $U$ and $V$, two open connected linearly ordered subsets of $X=U\cup V$.
We can skip the trivial case when $X$ consists of less than two points.

The intersection $U\cap V$ has to be a~disjoint union of open (possibly unbounded)
intervals in $U$, namely its connected components.

For the use of this proof we say a subset $A$ of a linearly ordered space is
bounded from below (above) if there exists a strict lower (upper) bound of this set
(an element strictly smaller/greater than any element of $A$).
For example the interval $[0,1]$ is, in this sense, not bounded in itself,
while it is bounded in $\R$.

Assume there exists $W$, a~connected component of $U\cap V$ bounded from both sides
(with respect to the order on $U$). Then its closure in $U$ (closed interval) is compact,
hence closed in $X$ and therefore $\cl{W}\cap V \subseteq U$.
For $W$ is closed in the intersection $U\cap V$ we obtain $W$ is closed in $V$.
Since it is simultaneously open, $V=W\subseteq U$ and we are done.

Assuming neither $U\subseteq V$ nor $V\subseteq U$ leads then to conclusion
that $U\cap V$ consists of unbounded (from one side) intervals (with respect
to the order on $U$ as well as with respect to the order on $V$,
since the reasoning is fully symmetric).
If there is only one such interval, then we use the fact that there are only two possible
orders compatible with a~connected order topology so, by reversing order on $V$ if necessary,
we will obtain equality of orders on the intersection.
We put $U\setminus V\ni x < y \in V\setminus U$ whether it holds
$U\setminus V\ni x' < y' \in V\cap U$, or the reverse inequality otherwise.
Then we get one linear ordering on $U\cup V$, extending the one on $U$, and compatible
with the topology for it agrees locally with orders on $U$ and $V$.

Now assume that $U\cap V$ consists of two unbounded intervals. Pick any $x\in U\setminus V$.
Then $U=(\leftarrow,x ]_U\cup [x,\rightarrow )_U$ and both intervals have connected
intersection with $V$. Hence, by ``splitting point $x$''\* into $x_R$ and $x_L$,
we can apply the previous case twice to obtain that $[x_R,\rightarrow)\cup V\cup(\leftarrow,x_L]$
is a compact orderable space (connected, with both endpoints).
Identifying $x_R$ and $x_L$ leads to a loop-ordered space homeomorphic to $X$.

\* Note that there are subtle details in the operation of ``point splitting''. We define
$[x_R,\rightarrow)$ and $(\leftarrow,x_L]$ as linearly ordered spaces and observe that the
points of their intersections with $V$ have the same basic neighbourhoods as points in $X$.
Then $(x_R,\rightarrow)\cup V\cup(\leftarrow,x_L)$ equals $X\setminus\{x\}$ (together with topology).
Moreover, the identification of $x_R$ and $x_L$ leads to a point with ``the same'' basic
neighbourhoods as the point $x$. 
\end{proof}

\begin{proof}[Proof of the Theorem \ref{thm.class}]
Consider a~regular atlas of orders, $\mcU$, on $X$ consisting of connected sets.
Without loss of generality we can assume that every set from the atlas has at least two points.
Otherwise our space trivially has an order topology.

Assume there exists a~point $x_0\in X$ being an endpoint of its connected orderable
neighbourhood $U_0$. We may and do assume that it is the smallest element in the order on $U_0$.
Since $X$ is connected, for any $y\in X$ there exists a~finite
sequence of sets $U_1,\ldots,U_n$ from the atlas $\mcU$ such that $y\in U_n$ and
$U_{j-1}\cap U_j\neq \emptyset$ for $j=1,\ldots,n$.
We can inductively apply Lemma~\ref{lem.union} to sets $\bigcup_{i=0}^{j-1} U_i$
and $U_j$ to obtain that either the union $\bigcup_{i=0}^j U_i$ is orderable,
or it is a~loop-ordered space. Intervals containing $x_0$ have
one-point boundary and hence $x_0$ cannot be contained in a~loop-ordered space,
so we can proceed for every $j=1,\ldots,n$, obtaining at each step an open
orderable subspace of $X$.

We proved that every point $y\in X$ belongs to some open
and connected orderable set $U_y$ containing $x_0$. By reversing orders if necessary
we can obtain that order on each such connected set agrees with the order on $U_0$.
They form an open cover $\mathcal{V}:=\{V_y\}_{y\in X}$.
Since $x_0$ is clearly an endpoint of every set from $\mathcal{V}$,
it is also an endpoint of intersection of any $V_y, V_{y'}\in\mathcal{V}$.
We know, that $x_0$ does not belong to any loop-ordered set, hence the intersection
$V_y\cap V_{y'}$ is connected (cf. proof of the Lemma~\ref{lem.union}).
It means one of the sets is an interval in the other and the orders clearly agree.
Hence we can consider an order on $X$ being the union of all orders on sets
in $\mathcal{V}$ and such order is compatible with the topology on $X$,
since it clearly agrees locally.

Now assume that no point in $X$ is an endpoint of its connected orderable neighbourhood.
We will consider two cases.

1. There is a point $x_0\in X$ such that $X\setminus\{x_0\}$ is connected.
$x_0$ is not an endpoint of its ordered neighbourhood $U_0$, so we can pick points $a$ and $b$
from different components of $U_0\setminus \{x_0\}$. Since $X\setminus\{x_0\}$ is connected,
we can join $a$ and $b$ be a~sequence of connected orderable neighbourhoods inside $X\setminus\{x_0\}$.
Their union is not the whole space, so it cannot be a~loop-ordered space. We obtain some
open connected and orderable set $V$ containing $a$ and $b$. Then $V\cup U_0$ has to be
a~loop-ordered space, for $U_0\cap V$ has at least two connected components
(cf. proof of the Lemma~\ref{lem.union}). Connectedness implies that loop-ordered subset
has to be the whole space $X$.

2. $X\setminus\{x\}$ is not connected for any $x\in X$.
Fix $x_0\in X$. Since any neighbourhood of $x_0$ splits into at most two components,
$X\setminus\{x_0\}$ also has two components, let say $X_1$ and $X_2$.
Note that both $X_1\cup\{x_0\}$ and $X_2\cup\{x_0\}$ are connected and regularly
locally ordered and have a~point (namely $x_0$) being an endpoint of its connected
orderable neighbourhood. Hence both those spaces are orderable and they
glue together at $\{x_0\}$ to the orderable space $X$.
\end{proof}

Classification leads to the following simple corollaries.

\begin{wnioch}
In a connected regularly locally ordered space there exists at most one pair
of distinct points such that their removal does not disconnect the space.
\end{wnioch}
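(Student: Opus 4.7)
The plan is to invoke Theorem~\ref{thm.class} and dispose of the two resulting cases separately.

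By Theorem~\ref{thm.class}, the space $X$ is either orderable or loop-ordered. In the orderable case, I fix a compatible linear order on $X$; since $X$ is connected, this order is continuous and every connected subset of $X$ is convex (facts recalled in Section~2). The key observation is that if $x\in X$ is neither the smallest nor the greatest element, then $X\setminus\{x\}=(\leftarrow,x)\cup(x,\rightarrow)$ splits as a disjoint union of two nonempty open intervals, hence is disconnected. Any $y\neq x$ lies in exactly one of the two intervals, say the left one; then the right interval is a nonempty clopen subset of $X\setminus\{x,y\}$ whose complement $(\leftarrow,x)\setminus\{y\}$ is also nonempty (a nonempty open interval in a densely ordered set cannot be a singleton). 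Thus $X\setminus\{x,y\}$ is disconnected. It follows that a pair $\{x,y\}$ whose removal leaves $X$ connected must consist of endpoints (smallest or greatest element) of $X$, and since there are at most two such endpoints, at most one such pair exists.

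In the loop-ordered case, I use the proposition stated earlier that for every $z\in X$ the subspace $X\setminus\{z\}$ is connected, orderable and not compact. From the explicit description of $X$ as the quotient of a compact connected orderable space by identification of its smallest and greatest elements, one sees that $X\setminus\{z\}$ is homeomorphic to an open interval, hence admits a compatible order with no smallest and no greatest element. Then for any second point $y\neq z$, applying the argument of the orderable case \emph{inside} the connected orderable space $X\setminus\{z\}$ (where $y$ is automatically not an endpoint) shows that $X\setminus\{z,y\}=(X\setminus\{z\})\setminus\{y\}$ is disconnected. Hence in the loop-ordered case no pair at all satisfies the condition.

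Combining the two cases yields the claim. I do not anticipate a real obstacle; the only point deserving care is verifying in the loop-ordered case that the compatible order on $X\setminus\{z\}$ has no endpoints, which is immediate from the quotient description but should be stated explicitly.
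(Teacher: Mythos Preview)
Your argument is correct and follows exactly the route the paper intends: the paper offers no proof at all, listing the statement as one of the ``simple corollaries'' of the classification Theorem~\ref{thm.class}, and your case split (orderable versus loop-ordered) with the endpoint analysis is precisely the expected unpacking of that remark. The one point you flag yourself---that in the loop-ordered case the compatible order on $X\setminus\{z\}$ has no endpoints---is indeed the only place requiring a sentence of justification, and the quotient description gives it immediately.
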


Note that in general the set of points not separating a connected
locally ordered space may be very big. Example~\ref{przy.punktyN} shows
that in a separable space it can be of cardinality continuum.

\begin{wnioch}
Every connected but not compact regularly locally ordered space is orderable.
\end{wnioch}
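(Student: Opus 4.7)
The plan is to invoke Theorem~\ref{thm.class} and eliminate one of the two alternatives by a compactness argument. Since $X$ is connected and regularly locally ordered, the classification theorem tells us that $X$ is either orderable or loop-ordered; I would just need to rule out the loop-ordered case.

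To rule out the loop-ordered case, I would recall the proposition stated earlier, which records that every loop-ordered space is compact (this is item (a) of the proposition right after the definition of loop-ordered space, and follows immediately from the construction: a loop-ordered space is the quotient of a compact linearly ordered space by the identification of its two endpoints, and continuous images of compact spaces are compact). Thus the assumption that $X$ is not compact excludes the loop-ordered alternative.

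Combining the two steps: by Theorem~\ref{thm.class}, $X$ is orderable or loop-ordered; by the previous paragraph, the second possibility forces $X$ to be compact, contradicting the hypothesis; hence $X$ is orderable. The main (and really only) substantive ingredient is the classification theorem — there is no genuine obstacle here, as the corollary is essentially a contrapositive reading of the ``loop-ordered $\Rightarrow$ compact'' half of the proposition. No further case analysis or new construction is required.
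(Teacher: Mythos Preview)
Your proposal is correct and matches the paper's intended argument: the corollary is stated without proof as an immediate consequence of Theorem~\ref{thm.class}, and your reasoning (apply the classification, then rule out the loop-ordered case via its compactness) is exactly the intended one-line justification.
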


Several theorems, using notion of end-finiteness (``randendlich''), proven by Herrlich in
\cite{herrlich} can be easily derived from classifications presented here, since
loop-ordered spaces are certainly not end-finite. Below we present one example.
\begin{wnioch}[Theorem 2 from Chapter IV in \cite{herrlich}]
A connected locally ordered space is orderable if an only if
it is $T_3$ and end-finite.\end{wnioch}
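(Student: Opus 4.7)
The plan is to apply the classification Theorem~\ref{thm.class} and distinguish orderable from loop-ordered spaces via the number of non-cutpoints. First I would address the forward direction: if $X$ is a connected orderable space with at least two points, then it is hereditarily normal (hence $T_3$) by the folklore fact quoted early in the paper, and since the compatible order is continuous, any interior point $p$ splits $X$ into the two nonempty open intervals $(\leftarrow,p)$ and $(p,\rightarrow)$. Consequently the only candidates for non-cutpoints are the minimum and the maximum (if present), giving at most two non-cutpoints, which is exactly end-finiteness.

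For the converse, assume $X$ is connected, locally ordered, $T_3$ and end-finite. Lemma~\ref{lematT3} upgrades $T_3$ to regular local orderability, so Theorem~\ref{thm.class} applies and $X$ is either orderable or loop-ordered. It remains to rule out the loop-ordered case. By part (d) of the proposition on loop-ordered spaces, $X\setminus\{x\}$ is connected for every $x\in X$; thus in a loop-ordered space \emph{every} point is a non-cutpoint. Since a loop-ordered space comes from a connected compact linearly ordered space with at least two points, its underlying order must be continuous and dense-in-itself, so the quotient contains far more than two points. End-finiteness is therefore violated unless the loop-ordered alternative is excluded, leaving $X$ orderable.

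There is no real obstacle here: the classification theorem has already done the heavy lifting, and the only extra ingredient needed is the observation — immediate from property (d) of loop-ordered spaces — that loop-ordered spaces have no cutpoints at all. The proof is thus a short synthesis of the preceding results, and I would keep it to a few lines.
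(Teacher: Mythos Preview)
Your proposal is correct and matches the paper's approach: the paper presents this corollary without a detailed proof, merely remarking that it follows from Theorem~\ref{thm.class} because ``loop-ordered spaces are certainly not end-finite.'' Your argument fleshes out exactly this route---classification plus the observation from property~(d) that loop-ordered spaces have no cutpoints---together with the easy forward direction, so there is nothing to add.
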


\begin{wnioch}\label{loc-conn}
Every locally connected locally ordered Hausdorff space is a disjoint union
of some number of loop-ordered spaces and connected linearly ordered spaces.
It is then hereditarily normal ($T_5$) and locally compact.
\end{wnioch}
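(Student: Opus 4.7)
The plan is to assemble the statement from results that have already been established, so no new technology is required; the corollary is really a bookkeeping exercise. First I would apply Lemma~\ref{lem.loccon}(b) to deduce that a locally connected Hausdorff locally ordered space $X$ is locally compact and regular. Combined with Lemma~\ref{lematT3}, regularity upgrades local orderability to regular local orderability, so from now on I can work with a regular atlas of orders.

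Next I would invoke the standard fact that in a locally connected space the connected components are open (hence also closed, since components are always closed). Because regular local orderability is hereditary on open subspaces (as noted just before the remark on higher separation axioms), each component $C$ of $X$ is itself a connected regularly locally ordered space. Theorem~\ref{thm.class} then forces each $C$ to be either orderable or loop-ordered. Since the components are pairwise disjoint, open and closed, and cover $X$, their union carries the topology of the topological disjoint sum; this yields the claimed decomposition.

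It remains to verify the two postscript properties. Local compactness was already handed to me by Lemma~\ref{lem.loccon}(b). For hereditary normality, connected orderable components are $T_5$ by the general fact recalled in Section~2 that every linearly ordered space is hereditarily (collectionwise) normal, and loop-ordered components are $T_5$ by part~(b) of the proposition on loop-ordered spaces. A topological sum of $T_5$ spaces is $T_5$ (every subset inherits the sum topology and its trace in each summand is separated there, so a routine gluing of separating open sets finishes the job), hence $X$ itself is $T_5$.

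There is no substantive obstacle: the only point that requires any care at all is the observation that local connectedness of a Hausdorff locally ordered space genuinely implies openness of components (so that the sum description is topological, not merely set-theoretic), and that $T_5$ is preserved under arbitrary topological sums. Everything else is an invocation of already-proven results.
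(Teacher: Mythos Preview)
Your argument is correct and follows essentially the same route as the paper: use Lemma~\ref{lem.loccon}(b) (and Lemma~\ref{lematT3}) to upgrade to regular local orderability and obtain local compactness, decompose into connected components (open by local connectedness), apply Theorem~\ref{thm.class} to each, and conclude $T_5$ from the known $T_5$-ness of the summands. The paper's own proof is simply a terse one-sentence version of exactly this.
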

\begin{proof}
The first assertion comes straightforward from the decomposition onto connected
components which are connected regularly locally ordered spaces (see Lemma~\ref{lem.loccon}).
They are also $T_5$, so is their disjoint union.
\end{proof}

There is one more fact about locally ordered spaces and connectedness.
\begin{lem}\label{lem.connected} 
A connected subset of a regularly locally ordered space is a regularly locally ordered space.
\end{lem}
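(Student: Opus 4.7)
Since $Y$ inherits $T_3$ from $X$, by Lemma~\ref{lematT3} it suffices to show $Y$ is locally ordered. Fix $y \in Y$. Take an open $U \ni y$ in $X$ with $\cl{U}$ orderable, and pick an open interval $V = (a,b)$ in $\cl{U}$ containing $y$ with $\cl{V} \subseteq U$; then $V$ is open in $X$ (being an open subset of $\cl{U}$ that lies inside the $X$-open $U$) and orderable (as a convex subset of the orderable $\cl{U}$). Let $C$ be the connected component of $y$ in $V \cap Y$. As a connected subset of the orderable space $V$, $C$ is convex in $V$, hence itself orderable in the inherited order. The whole task now reduces to showing that $C$ is open in $Y$, for then $C$ is the sought orderable open neighborhood of $y$ in $Y$.

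At a point $c \in C$ that is not an endpoint of $C$ with respect to $V$'s order, convexity supplies $c_1 < c < c_2$ in $C$, so $(c_1,c_2)_V \subseteq C$ is an $X$-open (hence $Y$-open) neighborhood of $c$. The delicate case is $c$ being an endpoint, say $c = \max C$; the case $c = \min C$ is symmetric. If $C$ were not open at $c$ in $Y$, then there would exist $z \in (V \cap Y)\setminus C$ with $c < z$ arbitrarily close to $c$. Any closed interval $[c,z]_V$ wholly inside $Y$ would be a convex, hence connected, subset of $V \cap Y$ meeting both $C$ and the component of $z$, forcing $z \in C$. Consequently we may pick $p \in V \setminus Y$ with $c < p < z$.

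The point $p$ is used to disconnect $Y$, producing the required contradiction. Set $B := Y \cap (p,b)_{\cl{U}}$, which is open in $Y$ (since $(p,b)_{\cl{U}}$ is open in $X$), contains $z$, and avoids $y$; set $A := (Y \cap (a,p)_{\cl{U}}) \cup (Y \setminus \cl{V})$, also open in $Y$. The decomposition $Y = A \sqcup B$ holds provided no point of $Y$ lies in $\{a,p,b\}$, which at $p$ is automatic. The main obstacle is arranging $a,b \notin Y$: if the endpoints of $V$ can be chosen outside $Y$, then $A$ and $B$ are nonempty ($y \in A$, $z \in B$), disjoint, open, and cover $Y$, contradicting its connectedness. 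The only obstruction to such a choice is that $Y$ contain an entire $\cl{U}$-open interval around $y$ — but such an interval would itself be an $X$-open orderable neighborhood of $y$ lying in $Y$, and the conclusion is already at hand. This careful handling of the boundary of $V$ is the one subtle point of the argument.
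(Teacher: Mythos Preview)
Your overall strategy --- show that the connected component $C$ of $y$ in $V\cap Y$ is an orderable open neighbourhood of $y$ in $Y$ --- is sound and close in spirit to the paper's argument, but the final paragraph contains a genuine gap.

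The claim ``the only obstruction to such a choice is that $Y$ contain an entire $\cl{U}$-open interval around $y$'' is false. Suppose every admissible $a<y$ lies in $Y$ (so some left half-interval $(\alpha,y]\subseteq Y$) while some admissible $b_0>y$ lies outside $Y$. Then you \emph{cannot} choose both endpoints outside $Y$, yet $Y$ need not contain any two-sided open interval about $y$: there may well be points of $Y$ scattered in $(y,b_0)$ without any right half-interval $[y,\delta)$ lying in $Y$. In this situation your sets $A$ and $B$ fail to cover $Y$ (the point $a\in Y\cap\cl V$ belongs to neither), so the disconnection argument breaks down exactly where you flag it as ``the one subtle point''.

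The paper sidesteps this boundary difficulty entirely. Instead of separating $Y$ with one point $p\notin Y$ together with the endpoints of $V$, it produces \emph{two} points $p_1>p_2$ in $U\setminus Y$, both on the same side of $y$, with a point of $Y$ trapped between them; then $Y\cap(p_2,p_1)=Y\cap[p_2,p_1]$ is a nonempty proper clopen subset of $Y$, with no reference to $a$ or $b$ at all. Your own setup already yields this: from $c=\max C$, a nearby $z>c$ in $Y$, and $p\in(c,z)\setminus Y$, pick $z_1\in(c,p)\cap Y$ (which exists since $C$ is not open at $c$) and then $p_1\in(c,z_1)\setminus Y$; now $p_1<z_1<p$ gives the desired configuration. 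Alternatively, one can repair your decomposition by taking $A=Y\setminus[p,b]_{\cl U}$, which requires only $b\notin Y$, and then observing separately that if no admissible $b$ lies outside $Y$ then $[y,b)\subseteq C$ and $C$ has no maximum to worry about.
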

\begin{proof}
Fix connected set $C$, point $x\in C$, its neighbourhood $U$ with orderable closure
and assume $C\setminus\{x\}\neq\emptyset$ (singletons are obviously regularly locally ordered).
We claim that there exists a neighbourhood of $x$ in $C$ which is an interval in $U$.

First observe, that there cannot be an interval around $x$ not intersecting $C$.
Otherwise $x$ would be an isolated point in $C$.

We can approach $x$ by a net in $C\cap U$. Without loss of generality
we can assume that it is a decreasing net (consisting of elements greater than $x$ in $U$).
There cannot be a decreasing net of points in $U\setminus C$ approaching $x$,
since then there would be an interval containing points from $C$ with ends outside,
leading to separation of the connected set $C$
(Regularity guarantees that closure of such interval is contained in $U$).
Hence some nontrivial interval including $x$ is contained in $C$.

If there simultaneously exists an increasing net approaching $x$, we need to repeat
the reasoning to obtain that $x$ lies in the interior of the interval contained
in $C$.

We obtained that $C$, for every its point, contains an interval in a
neighbourhood from the atlas of orders, which is an orderable neighbourhood in $C$.
\end{proof}

Note that without the assumption of regularity connected components
may not be locally ordered. The spaces from Example~\ref{przy.nieLPPT.skl} and
\ref{przy.nLPPT.skT2} include such components. The second example is
$\TcompH$ proving the minimality of $T_3$ axiom in the above lemma.

Using the previous lemma we can formulate the following property.
\begin{lem}\label{clop-dec} 
Every regularly locally ordered space has such open cover that its every element
is closed and either a linearly ordered space or a loop-ordered space.
\end{lem}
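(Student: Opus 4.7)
The plan is to produce, for each $x \in X$, a clopen subset $V_x\ni x$ which is a linearly ordered or loop-ordered space; the family $\{V_x\}_{x\in X}$ will then be the desired cover. The construction will split according to the nature of the connected component $C_x$ of $x$: by Lemma~\ref{lem.connected} and Theorem~\ref{thm.class}, $C_x$ is already either linearly ordered or loop-ordered, and this dichotomy will guide the two branches of the argument.

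If $C_x$ is loop-ordered, Lemma~\ref{lem.clopl} gives that $C_x$ is open, and as a connected component it is closed, so $V_x:=C_x$ is a clopen loop-ordered subspace containing $x$. The delicate case is when $C_x$ is linearly ordered, since $C_x$ need not be open in $X$ (for instance, in $\mathbb{Q}$ every singleton is a connected component yet none is open). To handle this I will fix a regular atlas $\mcU$ of orders on $X$ and set
\[
V_x := \bigcup\bigl\{U\in\mcU : \exists\,U_0,\ldots,U_n=U \text{ in } \mcU \text{ with } U_0\ni x \text{ and } U_i\cap U_{i+1}\neq\emptyset\bigr\}.
\]
This $V_x$ is clearly open as a union of open atlas members. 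For closedness: if $y\in\overline{V_x}$, pick an atlas neighbourhood $U_y\ni y$; it must meet $V_x$, so $U_y$ itself is chain-reachable from some $U_0\ni x$, and hence $y\in U_y\subseteq V_x$.

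It remains to construct on $V_x$ a compatible linear order. The idea is to start with the order on some $U_0\ni x$ and extend it along finite chains, invoking Lemma~\ref{lem.union} at each step to fuse the current order with that of the next atlas set (reversing its orientation if necessary so that the two agree on the overlap). A loop-ordered piece cannot appear during this gluing: such a piece would, by Lemma~\ref{lem.clopl}, be open, hence clopen, and would coincide with the connected component containing $x$ — making $C_x$ loop-ordered, contrary to the case assumption. So every finite union remains linearly ordered, and these partial orders cohere into a global linear order on $V_x$ whose order topology agrees locally (and therefore globally) with the given topology. The main obstacle is that Lemma~\ref{lem.union} is stated for connected orderable sets, whereas on a non-locally-connected $X$ the atlas $\mcU$ may contain disconnected orderable open sets; I expect to circumvent this by refining the gluing so that it proceeds on maximal convex subsets of the closures $\overline{U}$ — these are themselves connected linearly ordered spaces — and by arranging the extension as a transfinite recursion that respects the convex-component structure of each atlas member.
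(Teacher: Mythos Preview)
Your set $V_x$ is defined far too greedily, and in general it will \emph{not} be orderable. Consider $X = S^1 \sqcup (0,1)$ and the point $x = \tfrac12 \in (0,1)$; this $X$ is regularly locally ordered and $C_x = (0,1)$ is linearly ordered, so you are in your second case. Take a regular atlas containing $U_0 = (\tfrac14,\tfrac34) \sqcup A$ for an open arc $A \subset S^1$ (its closure $[\tfrac14,\tfrac34] \sqcup \overline{A}$ is a disjoint union of two closed intervals, hence orderable), together with further arcs covering $S^1$ and intervals covering $(0,1)$. Then every atlas member is chain-reachable from $U_0$, so $V_x = X$. But $X$ contains the loop-ordered subspace $S^1$ and is therefore not orderable, no matter how clever the gluing. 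Your argument that ``a loop-ordered piece cannot appear'' only rules out the \emph{entire} finite union $U_0\cup\dots\cup U_k$ being loop-ordered (since it contains $x$ and would then sit inside $C_x$); it does not prevent $V_x$ from swallowing a loop-ordered component disjoint from $C_x$. The acknowledged difficulty with Lemma~\ref{lem.union} requiring connected pieces is a symptom of the same problem: once atlas members are allowed to be disconnected, chains can jump between components.

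The paper avoids this by staying local. For a linearly ordered component $C$ it does \emph{not} take a chain-saturation; instead it adjoins to $C$ only (at most two) small ordered neighbourhoods of the endpoints of $C$, obtaining a linearly ordered set $U \supseteq C$ on which the orders fit together by hand. Inside $U$ one then picks the maximal convex subset $K$ with $C \subseteq K \subseteq V$ for a suitable clopen $V$ whose closure lies in $U$; this $K$ is the desired clopen orderable neighbourhood of $C$. The key point you are missing is that one must control the size of the cover element so that it cannot reach any other component of the wrong type.
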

\begin{proof} Denote the given regularly locally ordered space by $X$.
Let us start with decomposing $X$ into connected components. For every connected
subset of a regularly locally ordered space is regularly locally ordered, it
is either a linearly ordered space (possibly singleton) or loop-ordered space.

Loop-ordered component is always compact and open (Lemma~\ref{lem.clopl}).

Linearly ordered connected component $C$ is open unless it has an endpoint.
For non-open component we can consider ordered neighbourhoods of
its endpoints (assume they are disjoint for distinct endpoints of one component)
and naturally treat both of them together with $C$
as one linearly ordered space (tough not connected anymore). Since order on
the connected part of the neighbourhoods has to agree (after reversing if necessary)
with the order on $C$, there can be easily chosen one order on the union.
Denote this union be $U$ and fix an order on it.

We can find any small (with closure contained in $U$) closed-open neighbourhood
$V$ of $C$ (just separate it from ends of arbitrary interval covering $C$.
Then pick the maximal convex set $K$ in $U$
such that $C\subseteq K\subseteq V$ (it is just a union of all such convex sets).
It is closed-open and orderable (since convex) neighbourhood of $C$.
\end{proof}

\section{Compact and similar spaces}
Classification of connected regularly locally ordered spaces can be somehow extended
on a wider class of spaces, namely those for which exist \emph{tame} atlases.

We start with noticing a following fact.
\begin{thm}\label{thm.loop}
Union of an arbitrary family of loop-ordered subspaces contained in
a regularly locally ordered space is both closed and open.
\end{thm}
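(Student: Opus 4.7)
The plan is to prove openness and closedness separately. Openness is immediate from Lemma~\ref{lem.clopl}: each $L_\alpha$ is open, and a union of open sets is open.

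For closedness, I would first upgrade the picture of each $L_\alpha$. Since $L_\alpha$ is compact (being loop-ordered) and $X$ is $T_3$ by Lemma~\ref{lematT3} hence Hausdorff, $L_\alpha$ is closed; combined with openness (Lemma~\ref{lem.clopl}) and connectedness, $L_\alpha$ is a nonempty clopen connected subset, and therefore coincides with the connected component of any of its points. Thus $\{L_\alpha\}$ is actually a family of pairwise-disjoint connected components of $X$.

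Now I fix $x \notin \bigcup_\alpha L_\alpha$ and aim to produce a neighborhood of $x$ disjoint from the union. Let $C$ be the connected component of $x$; by Lemma~\ref{lem.connected} it is itself regularly locally ordered, so Theorem~\ref{thm.class} splits the argument into two cases. If $C$ is loop-ordered, the clopen-component argument above shows $C$ is a clopen neighborhood of $x$, distinct from every $L_\alpha$ (otherwise $x\in L_\alpha$), and hence disjoint from the union. If $C$ is orderable, I invoke Lemma~\ref{clop-dec} to pick a clopen cover element $K$ containing $x$; this $K$ must be orderable, because any loop-ordered cover element would be a loop component and hence disjoint from the different component $C$. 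For each $\alpha$, the intersection $K\cap L_\alpha$ is clopen in the connected $L_\alpha$ and therefore either empty or all of $L_\alpha$; the latter possibility would embed the loop-ordered $L_\alpha$ into the orderable $K$, contradicting the earlier proposition that loop-ordered spaces cannot be embedded in linearly ordered spaces. Hence $K$ is a neighborhood of $x$ disjoint from $\bigcup_\alpha L_\alpha$, as required.

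The delicate point is this last case: an arbitrary orderable neighborhood of $x$ from the atlas would not be enough, because a loop could meet such a neighborhood in a proper open arc without being contained in it. What is really needed is a \emph{clopen} orderable neighborhood of $x$, which is exactly what the structural Lemma~\ref{clop-dec} provides, and whose clopenness is then used to promote ``$L_\alpha$ meets $K$'' to ``$L_\alpha \subseteq K$'' via connectedness of $L_\alpha$.
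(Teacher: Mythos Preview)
Your proof is correct, and it takes a genuinely different route from the paper's argument. The paper proves closedness by a direct local argument: given $x\in\overline{A}\setminus A$ and an ordered neighbourhood $U$ of $x$, it sets $F:=\overline{A}\setminus U$ (nonempty since no $L_i$ embeds in $U$), uses regularity to separate $x$ from $F$ by an interval $V_x\subseteq U$, and then observes that some connected component of $L_{i_0}\cap U$ lies inside $V_x$ while its endpoints in $L_{i_0}$ fall into $F$, a contradiction. You instead leverage the structural machinery already developed---each $L_\alpha$ is a clopen connected component; Theorem~\ref{thm.class} and Lemma~\ref{clop-dec} furnish a clopen orderable neighbourhood $K$ of any $x\notin\bigcup_\alpha L_\alpha$; and then clopenness of $K$ plus connectedness of each $L_\alpha$ forces $K\cap L_\alpha$ to be empty or all of $L_\alpha$, the latter being impossible in an orderable $K$. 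The paper's argument is more self-contained (it uses nothing beyond Lemma~\ref{lem.clopl}, regularity, and the non-embeddability of loops), so Theorem~\ref{thm.loop} stands independently of the classification theorem. Your argument is conceptually tidier once that machinery is in hand, and it nicely illustrates why Lemma~\ref{clop-dec} is worth isolating: the passage from an ordered neighbourhood (which a loop can meet in a proper arc) to a \emph{clopen} ordered neighbourhood is exactly the upgrade that makes the connectedness trick work.
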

\begin{proof}
Openness is a consequence of Lemma~\ref{lem.clopl}.

Assume $x$ belongs to the closure of $A:=\bigcup_{i\in I}L_i$, where $L_i$
are loop ordered subspaces of $X$. Consider arbitrary ordered neighbourhood
$U$ of $x$. Define closed set $F:=\overline{A} \setminus U$.
Since loop ordered space cannot be embedded in a linearly ordered space,
$L_i\setminus U \neq \emptyset$ for every $i\in I$ and consequently $F\neq \emptyset$.
The intersection $U\cap L_i$ is a disjoint union of connected orderable sets, hence
consists of several open intervals in $U$.

By regularity we can find disjoint neighbourhoods $V_x$ and $V_F$
of $x$ and $F$, respectively. Without loss of generality we can assume $V_x$ is
an open interval in $U$.

If $x$ does not belong to $A$, there exists $i_0\in I$ such that
some interval from $L_{i_0}\cap U$ is contained in $V_x$ (such intervals are present
in every neighbourhood of $x$ for $x$ is in the closure of $A$).
Then the endpoints of this interval in $L_{i_0}$ belong to the closure of $V_x$
as well as to the set $F\subseteq V_F$ what is a contradiction.
\end{proof}

Having the above theorem we can extend classification of regularly locally ordered spaces.
\begin{thm}\label{thm.lind}
Every regularly locally ordered Lindel\"of space is a disjoint union
of at most countably many loop-ordered spaces and at most two orderable spaces.
\end{thm}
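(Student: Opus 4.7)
The plan is to strip off the loop-ordered components via Theorem~\ref{thm.loop} and then combine the remaining orderable pieces into at most two orderable clopen subspaces by an endpoint-matching argument.

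Let $L$ be the union of all loop-ordered connected components of $X$. Each such component is clopen in $X$ (compact hence closed, and open by Lemma~\ref{lem.clopl}), and Theorem~\ref{thm.loop} gives that $L$ itself is clopen. Being closed in the Lindel\"of space $X$, the set $L$ is Lindel\"of, so its decomposition into disjoint open loop-ordered components must be countable; hence $L$ is a disjoint union of at most countably many loop-ordered spaces. Put $X' := X \setminus L$; this is a clopen Lindel\"of regularly locally ordered subspace whose connected components are all orderable (by Theorem~\ref{thm.class}, since no loop-ordered component remains). Applying Lemma~\ref{clop-dec} to $X'$ and extracting a countable subcover via Lindel\"ofness produces a countable cover $\{W_n\}_{n\in\N}$ of $X'$ by clopen orderable sets.

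The main step is to glue the $W_n$ into at most two clopen orderable subsets of $X'$. The key observation is that two orderable clopen subsets placed consecutively in a common linear order yield an orderable space with matching subspace topology if and only if the two adjacent ``ends'' at the interface are either both endpoint-free or both endpoint-bounded. Each $W_n$, as an orderable space, has two such ends and thus falls into one of the types $CC$, $CO$, $OO$. Viewing the $W_n$ as edges of a multigraph on vertex set $\{C, O\}$ -- a $CC$-piece being a loop at $C$, an $OO$-piece a loop at $O$, and a $CO$-piece a cross-edge -- the gluing problem becomes that of decomposing the multigraph into Eulerian trails. The two vertex degrees always share parity, so the multigraph admits a decomposition into at most two Eulerian trails: exactly two are needed precisely when both $CC$ and $OO$ pieces occur without any $CO$ cross-edge to bridge them, and one suffices otherwise. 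Concatenating the $W_n$ along each trail produces the required at most two orderable clopen subsets.

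The main obstacle I anticipate is that the cover $\{W_n\}$ need not consist of disjoint sets and that the multigraph may have infinite vertex degrees. Overlaps would be handled by replacing each $W_n$ with suitable clopen sub-pieces (subtracting previously processed regions and then restricting to maximal convex subsets inside $W_n$, in the spirit of the proof of Lemma~\ref{clop-dec}, to preserve orderability of each remainder). The Eulerian construction itself is elementary with only two vertices: loops at each vertex can be interleaved into a sequence, and cross-edges inserted as required. The topological verification at each interface -- that the combined order topology matches the disjoint union topology -- is then a routine local check using that the glued pieces are clopen in $X'$.
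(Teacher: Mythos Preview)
Your overall strategy matches the paper's: obtain a countable clopen cover of $X$ by orderable sets plus loop-ordered components (via Lemma~\ref{clop-dec} and Lindel\"ofness), disjointify by setting $V_n := U_n \setminus \bigcup_{j<n} U_j$, note that each non--loop $V_n$ then breaks into countably many convex (hence orderable) clopen pieces, and finally amalgamate the resulting countable family of orderable clopen pieces into at most two. The paper does the loop-stripping implicitly (the loop components from Lemma~\ref{clop-dec} are already disjoint from everything and survive the disjointification unchanged), whereas you peel them off first using Theorem~\ref{thm.loop}; both are fine.

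The genuine divergence is in the amalgamation step. The paper invokes Lemma~\ref{lem.join} from the Appendix, which argues directly: pair off the one-endpoint pieces, thread countably many two-endpoint pieces into a $\mathbb Z$-indexed chain with no endpoints, concatenate all no-endpoint pieces along a well-order, and then absorb any single leftover one-endpoint piece. Your Eulerian-trail reformulation on the two-vertex multigraph $\{C,O\}$ is an attractive way to \emph{see} why at most two pieces are needed, but the justification you give --- ``the two vertex degrees always share parity, so the multigraph admits a decomposition into at most two Eulerian trails'' --- is a finite-graph statement and does not literally apply when the degrees are infinite. With only two vertices the infinite case is indeed elementary, but making it precise amounts to exactly the case analysis in Lemma~\ref{lem.join} (e.g.\ a $\mathbb Z$-indexed walk through the $CC$-loops produces an $OO$-piece, etc.). So your approach is correct, but the Eulerian language does not actually shorten the argument; it repackages the same endpoint-matching that the paper carries out explicitly in the Appendix.
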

\begin{proof} Denote given regularly locally ordered Lindel\"of space by $X$.
Fix a closed-open cover of $X$ from Lemma~\ref{clop-dec} and choose
countable subcover $\{U_n\}_{n\in\N}$. Then, by defining sets
$V_n:=U_n\setminus\bigcup_{j<n}U_j$, for $n\in\N$, we obtain a closed-open cover consisting
of pairwise disjoint sets. Note that loop-ordered components were already disjoint with
all other sets from the cover $\{U_n\}_{n\in\N}$ hence they ware not modified when passing to
$\{V_n\}_{n\in\N}$.
Then each of the sets $V_n$ is either a loop-ordered space or a closed-open subspace
of a linearly ordered space. Since every open subspace of a linearly ordered space is
a disjoint union of orderable spaces, we have actually decomposed $X$ into
a disjoint union of at most countably many loop-ordered spaces and some number of
orderable spaces.
To finish the proof it suffices to observe that
arbitrary disjoint union of orderable spaces is in fact a union of
at most two such spaces (Lemma~\ref{lem.join} in the Appendix).
\end{proof}

\begin{wnioch}\label{wn-2AP}
Every second-countable regularly locally ordered space is homeomorphic to
a disjoint union of at most countably many unit circles and a subspace of the real line.
\end{wnioch}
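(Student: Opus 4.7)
The plan is to apply Theorem~\ref{thm.lind} and then check that under second countability each piece of the decomposition takes the claimed form. Since every second-countable space is Lindel\"of, Theorem~\ref{thm.lind} immediately supplies a decomposition
$X = \bigl(\bigsqcup_{n\in I} L_n\bigr) \sqcup Y_1 \sqcup Y_2$
with $I$ at most countable, each $L_n$ loop-ordered and each $Y_j$ a (possibly empty) linearly ordered space. The task reduces to identifying each $L_n$ with a unit circle and each $Y_j$ with a subspace of $\R$.

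For the loop-ordered components I would argue that second countability is hereditary and that every $L_n$ is regular (being a subspace of the $T_3$ space $X$, via Lemma~\ref{lematT3}), so Urysohn's metrization theorem makes each $L_n$ metrizable. The remark immediately after the definition of loop-ordered space then identifies any metrizable loop-ordered space with the unit circle in $\R^2$, so each $L_n$ is homeomorphic to the unit circle.

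The main obstacle is verifying that every second-countable linearly ordered topological space embeds as a subspace of $\R$. My approach would be to fix a countable base for $Y_j$ consisting of open intervals and to let $D$ be the union of their endpoints with all isolated points and with both sides of every jump; $D$ is then countable and order-dense in $Y_j$ in the sense that every nondegenerate open interval contains a point of $D$. A standard back-and-forth construction produces a strictly order-preserving injection $\phi\colon D\to\Q$, and the formula $\Phi(y) := \sup\{\phi(d) : d\in D,\ d\leq y\}$ (after first composing with a homeomorphism of $\R$ onto a bounded interval to ensure all suprema exist) extends it to a map $\Phi\colon Y_j\to\R$. Order-denseness of $D$ forces $\Phi$ to be strictly increasing, and careful bookkeeping of jumps, gaps and the resulting one-sided limits shows that the subspace topology on $\Phi(Y_j)$ inherited from $\R$ coincides with the image of the order topology on $Y_j$; hence $\Phi$ is a topological embedding.

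Finally, once each $Y_j$ sits inside $\R$, translating so that $\Phi(Y_1)\subseteq(-\infty,0)$ and $\Phi(Y_2)\subseteq(1,\infty)$ glues the two orderable pieces into a single subspace of $\R$. Together with the already established identification of the loop-ordered components with unit circles, this produces the decomposition claimed by the corollary.
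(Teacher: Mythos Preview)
Your argument is correct and follows essentially the same route as the paper: apply the Lindel\"of decomposition of Theorem~\ref{thm.lind}, use Urysohn metrization to recognise the loop-ordered pieces as circles, and embed the orderable pieces into $\R$. The only difference is that the paper dispatches the last step by citing the classical fact \cite[6.3.2c]{engel} that every second-countable linearly ordered space embeds in $\R$, whereas you sketch the standard order-dense-set construction yourself; your explicit gluing of $Y_1$ and $Y_2$ into a single subset of $\R$ is a small detail the paper leaves implicit.
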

\begin{proof}
Such space is metrizable (see \cite[4.2.8]{engel}), hence all the loop-ordered
components have to be homeomorphic to the unit circle. Second-countable linearly
ordered subspaces are embeddable into the real line (see \cite[6.3.2c]{engel}).
\end{proof}

In the case of compact spaces the presented characterisation is somehow simpler.
\begin{wnioch}\label{wn-comp}
Every compact Hausdorff locally ordered space is a disjoint union of finitely many
loop-ordered spaces and possibly a single compact orderable space.
\end{wnioch}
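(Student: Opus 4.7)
The plan is to derive this corollary directly from Theorem~\ref{thm.lind} by verifying its hypotheses and then sharpening its conclusion using compactness. Since $X$ is compact and Hausdorff it is regular ($T_3$), and hence by Lemma~\ref{lematT3} it is regularly locally ordered; compactness trivially implies the Lindel\"of property. Thus Theorem~\ref{thm.lind} applies and presents $X$ as a disjoint union $\bigsqcup_{n}L_n\sqcup O_1\sqcup O_2$ of at most countably many loop-ordered pieces and at most two orderable pieces, every piece being clopen in $X$.

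The second step is to use compactness to cut the countable collection of loop-ordered components down to a finite one. Any pairwise disjoint family of nonempty open sets covering a compact space must be finite, so only finitely many of the $L_n$ can be nonempty. Each of the (at most two) orderable components is then a closed subspace of the compact space $X$, hence compact; being orderable, it possesses both a smallest and a greatest element.

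Finally, if both $O_1$ and $O_2$ occur, I would merge them into a single compact orderable space by concatenating their linear orders, declaring every point of $O_1$ to precede every point of $O_2$. Because $O_1$ has a maximum and $O_2$ a minimum, in the combined order one has $O_1=(\leftarrow,\min O_2)$ and $O_2=(\max O_1,\rightarrow)$, so both are clopen and the subspace topology induced by the new order agrees with their original order topologies; consequently the new order topology coincides with the disjoint-union topology on $O_1\cup O_2$. This is exactly the content of Lemma~\ref{lem.join} invoked in the proof of Theorem~\ref{thm.lind}, so it can simply be cited. I do not anticipate a genuine obstacle here; the only slightly delicate point is the check that the concatenated order induces the correct topology, and that reduces to the routine observation above.
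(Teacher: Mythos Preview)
Your proof is correct and follows essentially the same route as the paper: apply Theorem~\ref{thm.lind}, use compactness to force the clopen decomposition to be finite, observe that the orderable summands are compact as closed subsets, and merge them by concatenating the two orders. The only difference is that you spell out explicitly why Theorem~\ref{thm.lind} applies (via Lemma~\ref{lematT3}) and why the concatenated order reproduces the disjoint-union topology, whereas the paper leaves these as tacit.
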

\begin{proof}
Applying the theorem for Lindel\"of spaces we obtain decomposition
into a disjoint union of loop-ordered spaces and (at most two)
linearly ordered spaces. Since each component of the union is open
and the space is compact, there are finitely many of them.
For each of them is closed, the linearly ordered components are compact.
Disjoint union of two compact linearly ordered spaces is a compact orderable space
for it is enough to treat every element of the first space
as smaller than any element of the second.
\end{proof}

Further we can observe.
\begin{wnioch}\label{wnT5}
Every compact Hausdorff locally ordered space is hereditarily normal ($T_5$).
\end{wnioch}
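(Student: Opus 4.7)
The plan is to reduce the statement directly to Corollary~\ref{wn-comp} and known facts about hereditary normality of the constituent pieces.

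First, I would invoke Corollary~\ref{wn-comp} to write the given compact Hausdorff locally ordered space $X$ as a \emph{finite} disjoint union
\[
X = L_1 \sqcup L_2 \sqcup \cdots \sqcup L_k \sqcup Y,
\]
where each $L_i$ is loop-ordered and $Y$ is either empty or a compact orderable space. Since compactness forces the union to be finite, each summand is clopen in $X$.

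Next I would observe that each piece is already hereditarily normal: the orderable summand $Y$ is $T_5$ by the classical fact recalled at the beginning of Section~2 (every linearly ordered space is even hereditarily collectionwise normal), and every loop-ordered space $L_i$ is $T_5$ by part (b) of the proposition listing basic properties of loop-ordered spaces.

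Finally, hereditary normality is preserved under finite topological sums of clopen pieces: given any subspace $A \subseteq X$, decompose $A = \bigsqcup_i (A\cap L_i) \sqcup (A\cap Y)$ into clopen (in $A$) parts. Disjoint closed sets in $A$ split accordingly into disjoint closed sets within each summand, where hereditary normality of the summand produces separating open sets; taking their union back in $A$ gives the desired separation. Hence $X$ is $T_5$. There is no real obstacle here — the substantive work was already carried out in Theorem~\ref{thm.class}, Theorem~\ref{thm.lind}, and Corollary~\ref{wn-comp}; this corollary is essentially a bookkeeping consequence.
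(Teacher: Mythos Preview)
Your argument is correct, but it is not the route the paper takes. You deduce the corollary from the structural decomposition in Corollary~\ref{wn-comp}: a compact Hausdorff locally ordered space splits into finitely many clopen pieces, each loop-ordered or linearly ordered, hence $T_5$, and a finite topological sum of $T_5$ spaces is $T_5$. This is perfectly valid.

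The paper instead treats the corollary as a special case of a general lemma: \emph{every paracompact Hausdorff space admitting an open cover by hereditarily normal subsets is itself hereditarily normal}. The proof uses a locally finite refinement and pieces together separating open sets from each member of the cover. The advantage of the paper's route is twofold. First, it avoids invoking the classification machinery (Theorem~\ref{thm.class}, Theorem~\ref{thm.lind}, Corollary~\ref{wn-comp}); one only needs the atlas of orders and the fact that linearly ordered spaces are $T_5$. Second, it immediately yields the stronger statement that every \emph{paracompact} (not just compact) Hausdorff locally ordered space is $T_5$, which is stated as the next corollary. Your approach, tied as it is to the finite decomposition, does not extend to the paracompact case without further work.
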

This is a special case of a more general fact on paracompact spaces.
\begin{lem}
Every paracompact Hausdorff space which admits an open cover
of hereditarily normal subsets is itself hereditarily normal.
\end{lem}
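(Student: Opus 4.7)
My plan is to use the standard characterisation that a space is hereditarily normal exactly when every pair of separated subsets (sets $A,B$ with $\overline A\cap B=A\cap\overline B=\emptyset$) admits disjoint open neighbourhoods. Fix such a pair $A,B\subseteq X$ and the given cover $\{U_\alpha\}$ of hereditarily normal open sets. By paracompactness of $X$ I would extract a locally finite open refinement $\{W_\lambda\}_{\lambda\in\Lambda}$ with each $W_\lambda\subseteq U_{\alpha(\lambda)}$. Each $W_\lambda$ is open in the hereditarily normal $U_{\alpha(\lambda)}$, hence itself hereditarily normal, and $A\cap W_\lambda$, $B\cap W_\lambda$ remain separated inside $W_\lambda$ (closure in an open subspace is the trace of the ambient closure). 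So hereditary normality of $W_\lambda$ supplies disjoint open sets $G_\lambda,H_\lambda\subseteq W_\lambda$ with $A\cap W_\lambda\subseteq G_\lambda$ and $B\cap W_\lambda\subseteq H_\lambda$.

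The substantial step is patching the local pairs $(G_\lambda,H_\lambda)$ into disjoint global neighbourhoods $P\supseteq A$, $Q\supseteq B$; the naive unions fail because $G_\lambda$ and $H_\mu$ (with $\lambda\ne\mu$) can overlap outside $A\cup B$. To force compatibility I would invoke Stone's theorem that every paracompact Hausdorff space is fully normal: take an open star-refinement $\{O_i\}$ of $\{W_\lambda\}$, and for each $i$ fix $\lambda(i)$ with $\operatorname{st}(O_i,\{O_j\})\subseteq W_{\lambda(i)}$. Then define
\[
 P=\bigcup_i\bigl(O_i\cap G_{\lambda(i)}\bigr),\qquad
 Q=\bigcup_i\bigl(O_i\cap H_{\lambda(i)}\bigr).
\]
Both are open, and the inclusions $A\subseteq P$, $B\subseteq Q$ are immediate: if $x\in A$ then $x\in O_i$ for some $i$, hence $x\in O_i\cap W_{\lambda(i)}$, and so $x\in A\cap W_{\lambda(i)}\subseteq G_{\lambda(i)}$, whence $x\in P$.

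The main obstacle is verifying $P\cap Q=\emptyset$. A would-be witness $x\in(O_i\cap G_{\lambda(i)})\cap(O_j\cap H_{\lambda(j)})$ satisfies $O_i\cap O_j\ne\emptyset$, so the star-refinement property yields $O_j\subseteq\operatorname{st}(O_i)\subseteq W_{\lambda(i)}$; but to reach the desired contradiction $G_{\lambda(i)}\cap H_{\lambda(i)}=\emptyset$ one must know $\lambda(i)=\lambda(j)$, i.e.\ the assignment $i\mapsto\lambda(i)$ is locally coherent on overlaps. Securing this coherence is the combinatorial heart of the argument: it can be arranged either by iterating the star-refinement once more (paracompact Hausdorff spaces admit $\Delta$-refinements to any depth, so the star of each $O_i$ already sits inside a uniform $W_\lambda$), or by well-ordering $\Lambda$ and choosing $\lambda(i)$ canonically as a minimal valid index. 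With the coherent choice in place, $P$ and $Q$ are disjoint open neighbourhoods of $A$ and $B$, which is the separated-sets version of hereditary normality.
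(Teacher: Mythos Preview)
Your setup is sound through the point where you obtain the local separations $G_\lambda\supseteq A\cap W_\lambda$ and $H_\lambda\supseteq B\cap W_\lambda$ inside each $W_\lambda$. The gap is in the patching step, and neither of your proposed fixes closes it. Well-ordering $\Lambda$ and taking $\lambda(i)$ minimal does not force $\lambda(i)=\lambda(j)$ when $O_i\cap O_j\neq\emptyset$: the star condition gives only $O_j\subseteq W_{\lambda(i)}$, not $\operatorname{st}(O_j)\subseteq W_{\lambda(i)}$, so minimality of $\lambda(j)$ yields no comparison. Iterating the star-refinement meets the same obstruction one level up. Without coherence, $P$ and $Q$ can genuinely meet: nothing prevents a point of $W_{\lambda(i)}\cap W_{\lambda(j)}$ from lying in $G_{\lambda(i)}\cap H_{\lambda(j)}$, since those two sets were chosen independently in different $W$'s.

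The paper avoids the gluing problem entirely by arguing asymmetrically. Since paracompact Hausdorff implies $T_3$, one first refines to a locally finite open cover $\{V_j\}$ in which each \emph{closure} $\overline{V_j}$ lies inside one of the original hereditarily normal sets and is therefore itself hereditarily normal. Separating $A\cap V_j$ from $B\cap\overline{V_j}$ inside $\overline{V_j}$ produces open $U_j\subseteq V_j$ with $A\cap V_j\subseteq U_j$ and, crucially, $\overline{U_j}\cap B=\emptyset$, where the closure is now taken in $X$ (because $\overline{V_j}$ is closed in $X$). Local finiteness of $\{U_j\}$ then gives $\overline{\bigcup_j U_j}=\bigcup_j\overline{U_j}$, which misses $B$; the complement of this closure is the required open neighbourhood of $B$. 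The idea you are missing is to pass to hereditarily normal \emph{closures} rather than open pieces: this trades the coherence problem for a one-line closure computation via local finiteness.
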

\begin{proof}
Fix two separated sets $A$ and $B$ (i.e. $\cl{A} \cap B = A\cap \cl{B} =\emptyset$)
in the given paracompact space $X$.
We will show that $A$ has a neighbourhood with closure not intersecting $B$.

Given an open cover of $X$ consisting of sets with hereditarily normal closures
(for the space is $T_3$ we can easily build such cover), we can
pick a locally finite refinement $\mathcal V_0$. 
Now focus on $\mathcal V_1:= \{V\in \mathcal V_0: V\cap A\neq \emptyset\}=\{V_j\}_{j\in J}$,
an open cover of $A$.

For $j\in J$, by hereditary normality of $\cl{V_j}$, we can find open set $U_j\subseteq V_j$
such that $\cl{U_j} \cap B = \emptyset$ and $U_j\supseteq A\cap V_j$.
Note that the collection $\mcU:=\{U_j\}_{j\in J}$ is an open cover of $A$
locally finite in the space $X$.

Take $U=\bigcup_{j\in J} V_j$. It is a neighbourhood of $A$, and since $\mcU$ is locally finite,
$\cl{U}\cap B = \bigcup_{j\in J} \cl{U_j} \cap B = \emptyset$.
\end{proof}
While the lemma is interesting on its own right, we focus on the following immediate corollary.
\begin{wnioch}
Every paracompact Hausdorff locally ordered space is hereditarily normal ($T_5$).

In particular every Lindel\" of regularly locally ordered space is $T_5$.
\end{wnioch}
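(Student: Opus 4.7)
The plan is to derive both assertions directly from the preceding lemma by exhibiting, in each case, an open cover whose members have hereditarily normal closures.

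For the main statement, let $X$ be a paracompact Hausdorff locally ordered space. A paracompact Hausdorff space is normal, and in particular regular, so $X$ satisfies axiom $T_3$. Invoking Lemma~\ref{lematT3} upgrades local orderability to regular local orderability: every point of $X$ has a neighbourhood whose closure is orderable. Since linearly ordered spaces are hereditarily (collectionwise) normal, as noted in the preliminaries, these neighbourhoods furnish an open cover of $X$ by sets with hereditarily normal closures. The preceding lemma then delivers hereditary normality of $X$.

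For the ``in particular'' clause, I would verify that a Lindel\"of regularly locally ordered space falls under the first statement. By Lemma~\ref{lematT3} such a space is $T_3$ (hence Hausdorff), and a classical theorem (e.g.\ in Engelking) asserts that every Lindel\"of regular space is paracompact; so the first part applies. I do not anticipate a real obstacle: the corollary is essentially bookkeeping on top of the preceding lemma, and the only care needed is in routing $T_3$-ness through Lemma~\ref{lematT3} so that the orderable neighbourhoods themselves have orderable (and therefore hereditarily normal) closures.
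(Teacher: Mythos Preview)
Your argument is correct and matches the paper's intended (implicit) derivation of the corollary from the preceding lemma. One small remark: the detour through Lemma~\ref{lematT3} to obtain orderable \emph{closures} is not strictly necessary, since the orderable neighbourhoods themselves are already hereditarily normal and thus directly satisfy the hypothesis of the lemma; but your route is certainly valid.
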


Since in most cases we obtain normality of a space as a consequence of compactness,
paracompactness or higher separation axioms, the following question arise.
\begin{prob}
Is every normal locally ordered space hereditarily normal?
\end{prob}

\subsection*{Compact extensions and subspaces}

It is well known that any linearly ordered space has a linearly ordered compactification
-- even one extending the original order (cf. \cite[Problem 3.12.3(b)]{engel}).
Regularly locally ordered spaces are precisely those locally ordered ones
admitting a compactification; however, they may not admit a locally
ordered compactification. In general they may not even be embeddable in a paracompact
locally ordered space (see Examples \ref{przy.rat} and \ref{deska}).

Even under strong assumptions such as metrizability and local connectedness
a locally ordered space may not admit a locally ordered compactification.
An example can be an infinite disjoint union of unit circles. According to
the Theorem~\ref{thm.loop}, it would be closed in any bigger regularly locally ordered
space hence the latter could not be compact.

In fact, from the description of compact locally ordered spaces (Corollary~\ref{wn-comp}),
one can deduce the characterisation of all spaces admitting a~locally ordered compactification.
\begin{thm}
A topological space admits a locally ordered compactification if and only if
it is a disjointed union of a~suborderable space and finitely many loop-ordered spaces.
\end{thm}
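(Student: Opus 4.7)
The plan is to exploit the classification of compact Hausdorff locally ordered spaces (Corollary~\ref{wn-comp}) in one direction and to construct an explicit locally ordered compactification in the other.

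For the forward direction, I would take a locally ordered compactification $K$ of $X$ and apply Corollary~\ref{wn-comp} to write $K = L_1 \sqcup \cdots \sqcup L_n \sqcup M$ with each $L_i$ loop-ordered and $M$ (possibly empty) compact orderable. Since every summand is clopen in $K$, the dense subspace $X$ splits as a disjoint union of clopen pieces $X \cap L_i$ and $X \cap M$, each dense in its respective summand. For each index $i$, either $L_i \subseteq X$, which I keep as a loop-ordered summand, or I pick some $p \in L_i \setminus X$ and invoke property (d) of loop-ordered spaces to see that $L_i \setminus \{p\}$ is orderable, making $X \cap L_i$ suborderable; the piece $X \cap M$ is also suborderable as a subspace of the orderable $M$. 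It remains to merge the finitely many suborderable pieces obtained this way into a single suborderable space.

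The merger I plan is a concatenation with gaps: given $S_j \subseteq Y_j$ for $j = 1, \ldots, k$ with each $Y_j$ linearly ordered, form the linearly ordered space $Y := Y_1 \cup \{*_1\} \cup Y_2 \cup \{*_2\} \cup \cdots \cup \{*_{k-1}\} \cup Y_k$, where each $*_j$ is declared to sit strictly between $Y_j$ and $Y_{j+1}$. The inserted points make each $Y_j$ open in $Y$ and keep the subspace topology on $Y_j$ equal to its original order topology, so $S_1 \sqcup \cdots \sqcup S_k$ embeds as a subspace of $Y$ and is therefore suborderable.

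For the backward direction, write $X = S \sqcup L_1 \sqcup \cdots \sqcup L_n$, embed $S$ in a linearly ordered space $Y$, and use the standard fact (recalled at the start of this subsection) that $Y$ admits a linearly ordered compactification $\tilde Y$. The closure $\cl{S}^{\tilde Y}$ is then compact and orderable with $S$ dense in it, so $K := \cl{S}^{\tilde Y} \sqcup L_1 \sqcup \cdots \sqcup L_n$ is a finite disjoint union of compact Hausdorff locally ordered spaces, hence itself a locally ordered compactification of $X$. The main obstacle is the concatenation step: Lemma~\ref{lem.join} warns that disjoint unions of orderable spaces need not be orderable, but the finiteness of the family together with the separating points $*_j$ circumvents this.
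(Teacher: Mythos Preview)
Your proof is correct and follows exactly the approach of the paper's sketch: decompose the compactification via Corollary~\ref{wn-comp} for the forward direction, and compactify the suborderable summand via a linearly ordered compactification for the reverse. You have in fact supplied considerably more detail than the paper does---in particular the concatenation-with-gaps argument showing that a finite clopen union of suborderable pieces is again suborderable, which the paper subsumes under the word ``clearly.''
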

\begin{proof}[Proof sketch]
Clearly, every subspace of a compact locally ordered space is of the above form.
The reverse implication follows from the fact that a suborderable space admitts
a linearly ordered compactification.
\end{proof}

There is one more fact related to compact locally ordered spaces, namely
\begin{prop}\label{prop.compsub}
Every compact subset of a Hausdorff locally ordered space is
a regularly locally ordered space.
\end{prop}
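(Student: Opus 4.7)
The plan is to reduce the statement to two facts already recalled in the paper: the equivalence of $T_3$ with regular local orderability for locally ordered spaces (Lemma~\ref{lematT3}), and the theorem of Herrlich that on a compact subset of a linearly ordered space the subspace topology coincides with the order topology given by the inherited order.

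First I would observe that $K$, being a compact subset of a Hausdorff space, is itself a compact Hausdorff space, hence automatically regular ($T_3$). In view of Lemma~\ref{lematT3} it therefore suffices to verify that $K$ is \emph{locally ordered}, i.e.\ that every $x\in K$ admits an orderable neighbourhood \emph{in $K$}.

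To produce such a neighbourhood, I would fix $x\in K$ and pick an orderable neighbourhood $U$ of $x$ in $X$ from an atlas of orders. Since $K$ is compact Hausdorff, hence regular and locally compact, there exists a compact neighbourhood $V$ of $x$ in $K$ with $V\subseteq U\cap K$: concretely, take the $K$-closure of a sufficiently small open $K$-neighbourhood of $x$ whose closure still lies in $U$. Now $V$ is a compact subset of the linearly ordered space $U$, so by the Herrlich coincidence theorem (\cite[I.~Satz~13c]{herrlich}) recalled at the end of Subsection~2.1, the subspace topology on $V$ equals the order topology given by the linear order inherited from $U$. In particular $V$ is orderable, which is exactly the desired orderable neighbourhood of $x$ in $K$.

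The only delicate point is the extraction of a compact neighbourhood $V$ inside the given orderable $U$; this is where Hausdorffness of $X$ enters essentially, via local compactness of $K$. Once $V$ is produced, the Herrlich coincidence and Lemma~\ref{lematT3} do all the remaining work, so I do not anticipate further obstacles.
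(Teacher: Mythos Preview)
Your proof is correct and follows essentially the same route as the paper: both arguments locate, inside a fixed orderable chart $U$, a compact $K$-neighbourhood of the given point and then invoke Herrlich's coincidence theorem to deduce its orderability. The only difference is cosmetic packaging---you obtain that compact neighbourhood by citing regularity of compact Hausdorff spaces and then finish via Lemma~\ref{lematT3}, whereas the paper carries out the underlying compactness argument (a decreasing family of nonempty compact sets) by hand and verifies regular local orderability directly.
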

\begin{proof}
Let $K$ be a compact subspace of a Hausdorff locally ordered space.
Fix $x\in K$ and denote its linearly ordered neighbourhood by $U$.

Consider a decreasing family $\mathcal V$ of open intervals in $U$,
such that $\{x\}=\bigcap \mathcal V$.
Assume the set $\cl{V\cap K}\setminus U$ is nonempty for every $V\in \mathcal V$.
The family of such compact sets is decreasing, hence the intersection is nonempty.
A point from this intersection is contained in the closure of any neighbourhood of $x$,
what is a contradiction with the fact that the space is Hausdorff.

We obtained that, for some open interval $V\ni x$, the compact set $\cl{V\cap K}$
is contained in the linearly ordered subspace $U$, and hence it is orderable itself.
Therefore, $V\cap K$ is an orderable neighbourhood of $x$ in $K$.
\end{proof}
\begin{wnioch}
If a locally ordered Hausdorff space does not contain a~loop-ordered subspace,
then its every compact subset is orderable.
\end{wnioch}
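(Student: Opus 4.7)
The plan is to combine Proposition~\ref{prop.compsub} with the compact-case classification (Corollary~\ref{wn-comp}). Let $K$ be a compact subset of the given locally ordered Hausdorff space $X$. By Proposition~\ref{prop.compsub}, $K$ equipped with the subspace topology is a regularly locally ordered space, and compactness of $K$ is preserved. Hausdorffness is inherited too, so $K$ satisfies the hypotheses of Corollary~\ref{wn-comp}.

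Applying that corollary, $K$ decomposes as a disjoint union of finitely many loop-ordered spaces and at most one compact orderable space. The next step is to rule out all loop-ordered summands. Any such summand $L$ is, by construction, a loop-ordered subspace of $K$; since subspace topologies are transitive ($L \subseteq K \subseteq X$ gives the same topology on $L$ whether inherited from $K$ or from $X$), $L$ is also a loop-ordered subspace of $X$, contradicting the hypothesis. Therefore $K$ equals either the empty disjoint union or the single compact orderable summand, and in both cases $K$ is orderable.

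The argument is essentially bookkeeping on top of the two prior results, so I do not expect a genuine obstacle; the only point to be careful about is the transitivity observation ensuring that a loop-ordered piece of $K$ really is a loop-ordered subspace of the ambient $X$, so that the hypothesis on $X$ applies. Once that is noted, no further topology is needed.
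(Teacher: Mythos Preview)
Your proof is correct and matches the paper's intended argument: the corollary is stated there without proof, as an immediate consequence of Proposition~\ref{prop.compsub} together with the compact classification in Corollary~\ref{wn-comp}. Your transitivity remark about $L\subseteq K\subseteq X$ is the right (and only) thing to check.
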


\section{Examples}
The following section is a collection of all significant examples of
locally ordered spaces mentioned in the paper. We also note several topological properties of
presented spaces which are not main focus of this article.

Except from the last Examples (\ref{przy.nLPPT.skT2}, \ref{przy.rat}, \ref{przy.nielc} and \ref{deska})
all presented spaces are second-countable locally ordered spaces.


When defining a locally order topology we will often use the following fact, which is a
straight consequence of the axioms of topology.
\begin{lem} For a family $\left\{(X_i,\tau_i)\right\}_{i\in I}$ of topological spaces
such that for any two indices $i,j\in I$ and two sets $U\in\tau_i$ and $V\in\tau_j$ holds $U\cap V\in \tau_i\cap\tau_j$, there exists precisely one topology $\tau$ on $X:=\bigcup_{i\in I}X_i$
such that $\{X_i:i\in I\}$ is an open cover of $X$ and the induced topologies
are equal to the initial.
\end{lem}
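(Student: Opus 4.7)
The plan is to give an explicit construction of $\tau$ together with a short uniqueness argument.

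Concretely, I would define
\[
\tau := \{U\subseteq X \ :\ U\cap X_i \in \tau_i \text{ for every } i\in I\}.
\]
First I would verify, directly from the axioms of a topology applied in each $\tau_i$, that $\tau$ is closed under arbitrary unions, finite intersections, and contains $\emptyset$ and $X$ (using that intersecting with $X_i$ commutes with unions and finite intersections). So $\tau$ is a topology on $X$.

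Next I would check that each $X_j$ is open in $\tau$. By the assumption applied to $U=X_i\in\tau_i$ and $V=X_j\in\tau_j$ we get $X_i\cap X_j\in\tau_i\cap\tau_j$, so for every $i\in I$ we have $X_j\cap X_i\in\tau_i$, which says exactly $X_j\in\tau$. Hence $\{X_i:i\in I\}$ is an open cover of $X$ in the topology $\tau$.

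Now I would prove that the subspace topology which $\tau$ induces on $X_i$ coincides with $\tau_i$. One inclusion is immediate from the definition of $\tau$: if $U\in\tau$, then $U\cap X_i\in\tau_i$. For the converse, pick $W\in\tau_i$; I need to show $W\in\tau$, i.e.\ $W\cap X_j\in\tau_j$ for every $j$. Applying the hypothesis to $W\in\tau_i$ and $X_j\in\tau_j$ gives $W\cap X_j\in\tau_i\cap\tau_j\subseteq\tau_j$, as required.

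Finally, for uniqueness I would take any topology $\tau'$ on $X$ with $\{X_i\}$ an open cover and with the induced topology on each $X_i$ equal to $\tau_i$. If $U\in\tau'$, then $U\cap X_i$ is open in the subspace topology on $X_i$, i.e.\ lies in $\tau_i$, so $U\in\tau$. Conversely, if $U\in\tau$, then $U\cap X_i\in\tau_i$, and since $\tau_i$ equals the subspace topology from $\tau'$, we get $U\cap X_i\in\tau'$ (using $X_i\in\tau'$); writing $U=\bigcup_{i\in I}(U\cap X_i)$ shows $U\in\tau'$. Hence $\tau=\tau'$.

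There is no real obstacle here; the only point where the compatibility hypothesis is actually used is in showing both (i) that the $X_i$ are $\tau$-open and (ii) that every $\tau_i$-open set is globally $\tau$-open. The rest is formal verification, so I would keep the writeup short.
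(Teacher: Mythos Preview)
Your argument is correct, and it is exactly the routine verification the paper has in mind: the paper does not spell out a proof but merely remarks that the lemma is ``a straight consequence of the axioms of topology''. Your explicit description of $\tau$ as the family of sets whose trace on each $X_i$ lies in $\tau_i$, together with the two applications of the compatibility hypothesis you isolate, is the standard way to make that remark precise.
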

The presented condition means nothing but that topologies on any two spaces from the cover coincide
on their intersection. In our case, when each of the spaces would be linearly ordered,
it would mostly follow from the fact that the intersection is a disjoint union
of open intervals with orders coinciding on each one alone.

The simple lemma presented below is a useful tool when comes to verifying
semiregularity of a locally ordered space.
\begin{lem} A locally ordered space is semiregular if and only if it is Hausdorff
and admits an atlas of orders consisting of sets each being interior of its closure.
\end{lem}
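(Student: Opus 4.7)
The plan is to prove the two implications separately, in both cases exploiting the fact that an open subset of a linearly ordered space decomposes into its maximal open intervals (maximal convex open subsets), and that the closure of a convex subset of a linearly ordered space remains convex and is confined between the original endpoints.

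For the implication $(\Rightarrow)$, suppose $X$ is semiregular. For each $x\in X$ I would fix an ordered neighbourhood $U_x$ and, using the basis of regular open sets, a regular open $W_x$ with $x\in W_x\subseteq U_x$. Let $I_x=(a,b)_{U_x}$ be the maximal open interval of $W_x$ (viewed inside the ordered space $U_x$) containing $x$; as an interval of $U_x$ it is orderable, so it only remains to verify that $I_x$ is regular open in $X$. The inclusion $I_x\subseteq W_x$ and regular openness of $W_x$ give $\text{int}_X(\cl{I_x})\subseteq\text{int}_X(\cl{W_x})=W_x\subseteq U_x$. Any $y\in\text{int}_X(\cl{I_x})$ therefore has a basic ordered neighbourhood $(c,d)_{U_x}\ni y$ contained in $\cl{I_x}\cap U_x=\cl{I_x}^{U_x}$, which is convex in $U_x$ and sits inside $[a,b]_{U_x}$. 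So $a\le y\le b$; but by maximality of $I_x$ in $W_x$, any endpoint $a,b$ that actually belongs to $U_x$ must lie outside $W_x$, which (since $y\in W_x$) excludes $y\in\{a,b\}$ and forces $y\in(a,b)_{U_x}=I_x$. The collection $\{I_x\}_{x\in X}$ is then the desired atlas.

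For the implication $(\Leftarrow)$, assume $X$ is Hausdorff and equipped with an atlas $\{V_i\}$ of regular open orderable sets. To establish semiregularity, given $x\in X$ and an open $O\ni x$, I would choose $V_i\ni x$ and an open interval $J=(a,b)_{V_i}\subseteq V_i\cap O$ around $x$, then shrink to $K=(c,d)_{V_i}$ with $a<c<x<d<b$ in $V_i$ (if $x$ is isolated in $V_i$ take $B=\{x\}$, which is regular open by $T_1$; if $x$ is extremal in $V_i$ replace $K$ by a suitable one-sided interval). Setting $B:=\text{int}_X(\cl{K})$, this is regular open with $x\in K\subseteq B$. Since $K\subseteq V_i$ is regular open, $B\subseteq V_i$; and since $V_i$ is open, $\cl{K}\cap V_i=\cl{K}^{V_i}$, whence $B=\text{int}_{V_i}(\cl{K}^{V_i})\subseteq\cl{K}^{V_i}\subseteq[c,d]_{V_i}\subseteq J\subseteq O$.

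The principal technical point, and main potential obstacle, lies in the forward direction: verifying that the maximal open interval $I_x$ of $W_x$ is regular open in all of $X$, not just in $W_x$. This rests on two ingredients working together, namely the regular openness of $W_x$ in $X$ (which caps $\text{int}_X(\cl{I_x})$ inside $W_x$) and the combinatorics of convex subsets of $U_x$ (which caps it inside $[a,b]_{U_x}$); maximality of $I_x$ then forces the endpoints out of $W_x$ and pins the candidate set precisely to $I_x$. The routine adjustments at extremal or isolated points, and the case in which $a$ or $b$ does not exist as an element of $U_x$, are the only bookkeeping required.
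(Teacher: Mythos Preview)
The paper does not supply a proof of this lemma: it is introduced as a ``simple lemma'' serving as a tool for the examples and is stated without argument. There is therefore no paper proof to compare against.

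Your argument is correct. In the forward direction the two bounding facts you isolate---that regular openness of $W_x$ forces $\operatorname{int}_X(\cl{I_x})\subseteq W_x$, while convexity of the $U_x$-closure of $I_x$ forces $\operatorname{int}_X(\cl{I_x})\subseteq[a,b]_{U_x}$---combine with maximality exactly as you describe; the observation that an endpoint $a\notin I_x$ lying in $W_x$ would let $I_x\cup\{a\}$ be a strictly larger convex subset of $W_x$ is the clean way to exclude $y\in\{a,b\}$. In the backward direction the decisive step is your use of the hypothesis that $V_i$ is regular open to obtain $B=\operatorname{int}_X(\cl{K})\subseteq\operatorname{int}_X(\cl{V_i})=V_i$, after which everything reduces to order-theoretic bookkeeping inside $V_i$. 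The edge cases you flag (extremal or isolated $x$, missing endpoints $a$ or $b$) are indeed routine and do not affect the structure of the argument.
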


To make some constructions easier to understand, we will use special graphical
representation based on the following assumptions:
\begin{enumerate}
\item every line segment (possibly curved) denotes a set homeomorphic to an interval on the real line;
\item a neighbourhood of a point contains all close points within the horizontal line passing through;
\item a neighbourhood of a point lying on a dashed line consist of all points close to the line,
except the other points lying on the line;
\item when a point lies on a line segment, at least a part of its neighbourhood
is contained in that segment.
\end{enumerate}

Symbols $\omega$ and $\omega_1$ stand for the first countable
and the first uncountable ordinal, respectively.
We recall, that for any ordinal number $\lambda$ holds $\lambda=[0,\lambda)_\lambda$.

\begin{przy}[Line with doubled origin]\label{przy.nieT2}
The space of concern is obtained from the real line by adding additional point with the same
deleted neighbourhoods as the point $0$. In terms of the diagrams it can be drawn like below.
\setlength{\unitlength}{25pt}
\begin{center} \begin{picture}(10,2)(-5,-.5)
\multiput(0,-.5)(0,.2){10}{\line(0,1){.1}}
\thicklines
\put(0,1){\circle*{.2}}
\put(0,1){\line(1,0){5}}
\put(0,0){\circle*{.2}}
\put(0,0){\line(-1,0){5}}
\end{picture} \end{center}\medskip

The space is $T_1$, connected, locally connected, path connected but not Hausdorff
nor arcwise connected. Some further properties are listed in \cite{counter}.
\end{przy}

\setlength{\unitlength}{20pt}
\newsavebox{\vertd} \newsavebox{\vertdl}
\savebox{\vertd}(0,2)[b]{ \multiput(0,0)(0,-.2){10}{ \line(0,-1){.1} } }
\savebox{\vertdl}(0,3)[b]{ \multiput(0,0)(0,-.2){15}{ \line(0,-1){.1} } }

\begin{przy}\label{przy.nieUr}
Consider the set $X:=(0,\infty)_\R\cup\{a,b\}$ ($a\neq b$, $a,b\notin\R$)
and the following linearly ordered sets:
$$(0,\infty)_\R,\qquad
\bigcup_{n=1}^\infty (2n,2n+1)_\R \cup \{a\},\qquad
\bigcup_{n=1}^\infty (2n-1,2n)_\R \cup \{b\},$$
where real numbers are ordered naturally and points $a$, $b$ are the greatest elements
in the respective sets. The topologies on the intersections coincide,
hence the locally order topology is well defined.

The space $X$ is presented on the diagram below.
\begin{center}\begin{picture}(9,3)(-1,-1.5)
\multiput(-.1,-1.5)(1,0){5}{ \usebox{\vertdl} }  
\thicklines 
\multiput(.06,0)(1,0){5}{ \circle*{.2} } 
\put(.2,0){\line(-1,0){1}}
\matrixput(0.35,-1)(2,0){2}(1,2){2}{\line(1,0){.8}} 
\multiput(0.09,-1)(1,0){5}{ \circle{.2} } 
\put(4.16,-1){ \line(1,0){.5} }
\multiput(1.09,1)(1,0){4}{ \circle{.2} } 
\multiput(5,.92)(0,-1){3}{$\ldots$} 
\multiput(6,1)(0,-2){2}{\circle*{.2}} 
\put(6,1.2){$a$} \put(6,-.8){$b$}
\end{picture}\end{center} 

\begin{enumerate}[\ 1.]
\item Space $X$ is Hausdorff but no neighbourhoods of points $a$ and $b$
have disjoint closures.

\item After removing points $a$ and $b$, we are left
with a space homeomorphic to the real line.
Hence the space is $\sigma$-compact.

\item The space is semiregular, since the points from the middle row do not belong
to the interiors of ordered neighbourhoods of $a$ or $b$.
\end{enumerate}
\end{przy}

\begin{przy} \label{przy.nCompH}
Let $a:=(\omega_1,0)$ and $b:=(\omega_1,1)$. Consider the sets
$$A:=\omega_1\!\times\!(1/4,1/2)_\R \cup\{a\} \text{ and }
B:=\omega_1\!\times\!(3/4,1)_\R \cup\{b\}$$ with the lexicographic order.
For a limit oridnal $\lambda<\omega_1$ (also $0$) consider the set
$$U_\lambda:= \lambda\!\times\!(1/2,3/4)_\R \cup [\lambda,\lambda+\omega)_{\omega_1}\!\times\![0,1)_\R$$
with lexicographic order.
Order topologies on the intersections of given sets coincide, hence the locally order topology
on $X=\omega_1\!\times\![0,1)_\R\cup\{a,b\}$ is well defined.

\begin{enumerate}[\ 1.]
\item The above space $X$ is $\TcompH$ and semiregular.
\item $X$ is connected but not locally connected.
\item $X$ is not completely Hausdorff.

\begin{proof}
Suppose $f\colon X\!\to\![0,1]$ is continuous, and $f(a)\!=0$, $f(b)\!=1$.
There exists such $\lambda_0<\omega_1$ that for $\alpha\in(\lambda_0,\omega_1)_{\omega_1}$ holds
$f(\{\alpha\}\!\times\![1/4,1/2]_\R)\subseteq[0,1/3]_\R$ and $f(\{\alpha\}\!\times\![3/4,1]_\R)\subseteq[2/3,1]_\R$.

For a limit ordinal $\lambda\in(\lambda_0,\omega_1)_{\omega_1}$ any neighbourhood of a point
$(\lambda,0)$ contains a set $\{\alpha\}\!\times\!(1/2,3/4)_\R$
for some $\alpha\in(\lambda_0,\lambda)_{\omega_1}$.
The values of $f$ on the closure ($\{\alpha\}\!\times\![1/2,3/4]_\R$) are then contained in arbitrarily
small neighbourhood of the value $f((\lambda,0))$ (for sufficiently large $\alpha$).
This is a contradiction with $f((\alpha,1/2))\leq 1/3$ and $f((\alpha,3/4))\geq 2/3$.
\end{proof}
\end{enumerate}
\end{przy}

\begin{przy} \label{przy.nieT3} 
Consider the set $X:=[0,\infty)$ with topology given by the following basis of neighbourhoods:
for $x\in(0,\infty)$ we use euclidean neighbourhoods from $(0,\infty)$ and for $0$
we take sets of the form $\left[0,1/N\right)\cup\bigcup_{n=N}^\infty(2n,2n+1)$,
for natural $N\geq 1$.
The diagram is following:
\begin{center} \begin{picture}(10,3)(-1,-1.5)
\multiput(-.17,-.5)(1,0){7}{ \usebox{\vertd} }
\thicklines 
\multiput(0,1)(1,0){7}{ \circle*{.2} }
\multiput(0,0)(1,0){7}{ \circle{.2} }
\multiput(1,1)(2,0){3}{ \line(1,0){1} }
\multiput(.27,0)(2,0){3}{\line(1,0){.8} }
\put(6.27,0){\line(1,0){.5}}
\put(8,.2){$0$}
\put(8,0){\circle*{.2}}
\put(-.13,-1){ \line(1,0){8.5} }
\put(7.9,0){ \line(1,0){.5} }
\multiput(7,.92)(0,-1){2}{$\ldots$}
\put(.1,0){\oval(2,2)[l]}
\put(8.5,-.5){\oval(1,1)[r]}
\end{picture} \end{center}

\begin{enumerate}[\ 1.]
\item Note, that after removing point $0$ we are left with a space homeomorphic
to the real line. Hence, the sapce is $\sigma$-compact.

\item $X$ is completely Hausdorff but not regular ($T_3$).

\item $X$ is arcwise connected but not locally connected.

\item The main idea behind this example is closely related to
\emph{Smirnov's deleted sequence topology} (see \cite{counter}),
also referred to as \emph{K-topology}.
\end{enumerate}
\end{przy}

\begin{przy}\label{przy.punktyN}
Fix an enumeration of rational numbers $\Q=\{q_n\}_{n=0}^\infty$.
Consider a space being the union of the following two linearly ordered spaces:

$A:=\bigl(\left(\RbezQ\right)\!\times\!\{0\}\bigr)\cup
\bigcup_{n=0}^\infty\bigl(\{q_n\}\!\times\!(2n,2n+1)_\R\bigr)$
with lexicographic order inherited from $\R\times\R$, and
$B:= \bigcup_{n=0}^\infty\bigl(\{q_n\}\!\times\!(2n,2n+2]_\R\bigr)$
ordered by the second coordinate.

$A$ can be viewed as space $\R$ modified by replacing each rational number
by an open interval (homeomorphic to $(0,1)$) and $B$ is homeomorphic to $(0,\infty)$.
On the intersection $A\cap B$ both topologies clearly agree,
hence global topology on $X=A\cup B$ is well defined.

\begin{enumerate}[\ 1.]
\item Since the set $B$ is dense in $X$, connected and separable,
the whole space $X$ is connected and separable.
\item $X$ is not regular nor locally connected.
\item $X$ is completely Hausdorff.

\item Consider the elements of $\left(\RbezQ\right)\!\times\!\{0\}\subseteq X$.
Neither of them belongs to the connected and dense subset $B$,
hence removing arbitrarily many of them does not separate $X$.
\end{enumerate}
\end{przy}

The following two spaces contain not locally ordered connected components.
\begin{przy}
\newcommand{\mjov}[1]{\frac{-1}{#1}}
\label{przy.nieLPPT.skl} We take the set $X=T\cup B\cup E$, where
\begin{align*}
T=
\bigcup_{n=0}^\infty \left[\textstyle\mjov{4n-3},\mjov{4n}\right]_\R&
\times\{1\},\qquad E=[0,1)_\R\times\{0\}, \\
B=\bigcup_{n=0}^\infty\textstyle\Bigl(\bigl[\mjov{8n},\mjov{8n+1}\bigr]_\R\Bigr.&
\textstyle \cup\bigl[\mjov{8n+2},\mjov{8n+3}\bigr)_\R\cup \\
\cup &\textstyle \Bigl.\bigl(\mjov{8n+4},\mjov{8n+5}\bigr)_\R
\cup\bigl(\mjov{8n+6},\mjov{8n+7}\bigr]_\R\Bigr)\times\{0\}.
\end{align*}
Endow the sets $B\cup E$ and
$T\cup\bigcup_{n=0}^\infty\bigl(\mjov{4n},\mjov{4n+1}\bigr)_\R\times\{1\}$
with order topology given by the natural order on the first coordinate.
Their topologies agree on the intersection,
hence the locally order topology on $X$ is well defined.

The idea is presented on the diagram below. Note, that the basic neighbourhoods
of the points on the lower level ($B\cup E$) do not include the points from above.
\begin{center} \begin{picture}(16,2)(-1,-.5)
\matrixput(-.17,-.5)(3,0){4}(1,0){2}{ \usebox{\vertd} }
\put(11.83,-.5){\usebox{\vertd}}
\thicklines 
\matrixput(0,0)(6,0){3}(0,1){2}{ \circle*{.2} }
\matrixput(1,0)(6,0){2}(0,1){2}{ \circle*{.2} }
\matrixput(3,1)(6,0){2}(1,0){2}{ \circle*{.2} }
\multiput(.83,1)(3,0){4}{ \line(1,0){2} } 
\matrixput(.1,0)(3,0){4}(1.5,0){2}{\line(1,0){.8} } 
\matrixput(2.5,0)(.5,0){2}(1.5,0){2}{\circle{.2}}
\matrixput(8.5,0)(.5,0){2}(1.5,0){2}{\circle{.2}}
\matrixput(1.5,0)(4,0){2}(6,0){2}{\circle*{.2}}
\put(0,1){\line(-1,0){1}}
\put(14,0){\circle*{.2}}
\put(14,.2){$\ast$}
\put(13.83,0){ \line(1,0){1} }
\put(11.9,0){ \line(1,0){.6} }
\multiput(13,.92)(0,-1){2}{$\ldots$}
\end{picture} \end{center}

\begin{enumerate}[\ 1.]
\item $X$ is not Hausdorff.
\item The connected component of the point $\ast=(0,0)$ does not contain the half-open intervals.
Hence neighbourhoods of $\ast$ in its component are not orderable,
similarly as in the Example~\ref{przy.nieLokPorz}.
\end{enumerate}
\end{przy}

\begin{przy}\label{przy.nLPPT.skT2}
Fix an enumeration of rationals $\Q=\{q_n:n\in\N_+\}$ and for every irrational
$x\in\RbezQ$ fix one strictly increasing sequence $(x(k))_{k\in\N_+}$ of indices such
that $\lim_{k\to\infty} q_{x(k)} = x$.
Consider the following subsets of $\R\times\R$
$$L:=\bigcup_{n=1}^\infty \bigl\{q_n\bigr\}\!\times\!
\left(\textstyle \frac{-1}{2n},\frac{-1}{2n+2}\right]_\R,$$
$$P_x:=\bigl(\{x\}\!\times\!\left[0,1\right]_\R\bigr)\cup
\bigcup_{k=1}^\infty\bigl\{q_{x(k)}\bigr\}\!\times\!
\left(\textstyle \frac{-1}{2x(k)},\frac{-1}{2x(k)+1}\right)_\R,
\text{ for } x\in\RbezQ,$$
$$D:=\bigl(\Q\!\times\!\{0\}\bigr) \cup \bigcup_{x\in\RbezQ} \{x\}\!\times\!
\bigl((0,1]_\R\cup[2,3)_\R\bigr).$$
Claim $L$ and each of $P_x$ ordered by the second coordinate
and on $D$ use lexicographic order inherited from $\R\times\R$.
It is easy to verify that order topologies coincide on intersections of any two
sheets, hence the locally order topology on $X:=L\cup D\cup\bigcup_{x\in\RbezQ}P_x$
is well defined.

\begin{enumerate}[\ 1.]
\item $X$ is not $T_3$.
\item $X$ is not separable.
\item It is a matter of routine to check that $X$ is completely Hausdorff 
and semiregular.
\item $X$ has a connected component which is not a locally ordered space.

\begin{proof}
Note that $L$ with considered order topology is naturally homeomorphic to $(-1,0)$, hence connected.
Each set $P_x$ is contained in the connected component of $L$. Since we can approach
a rational number $q$ with a sequence $(x_n)$ of irrationals, we can approximate the point
$(q,0)\in D$ with points $(x_n,1)\in P_{x_n}\cap D$. Hence $Q:=\Q\!\times\!\{0\}$ is also contained in the
same component as $L$. Since each of the sets $\{x\}\!\times\![2,3)_\R$, for irrational $x$, is both
closed and open in $X$, they do not belong to the component of $L$, which then appears to be
$C:=L\cup Q \cup \bigcup_{x\in\RbezQ} P_x $.

Fix a rational number $q_0$. We claim that the point $(q_0,0)\in X$ does not possess
any orderable neighbourhood in $C$.

A small neighbourhood $U$ of $(q_0,0)$ in $C$ is contained in
$D\cap C = Q \cup \bigcup_{x\in\RbezQ} \{x\}\!\times\!(0,1]_\R$.
Moreover, every such neighbourhood consists of uncountably many copies of $(0,1]$ and
countably many singletons (not isolated!). Actually $q_0$ belongs to the interior
of the set $U_1=$ projection of $U$ on the first coordinate.
Let $\tilde{U}$ denote an open interval in $\R$ contained in $U_1$.

There are only two possible orders on the open set $\{x\}\times(0,1]$ compatible with topology,
hence a point of the form $(x,1)$ has to be an extremal point of its neighbourhood.
Apart from at most two such points, every one has a successor or predecessor in hypothetical
order inducing the topology on $U$. Such ``neighbours''
can be only points of the form $(q,0)$, for rational $q$, or $(x,1)$, for irrational $x$.
Consider $G$ the set of all irrational numbers $x$ from $\tilde{U}$, for which there exists
other irrational number $x'\in\tilde{U}$ such that there is
no element between $(x,1)$ and $(x',1)$ in the order on $U$.
Note that $\tilde{U}\setminus G$ is countable.

To each $x\in G$ we can assign a positive number $r(x):=|x-x'|$.
For any rational number $q\in \tilde{U}$ and a sequence $(x_n)\subseteq G$ converging to $q$
holds $(x_n,1)\stackrel{U}{\to} (q,0)$, as well as $(x_n',1)\stackrel{U}{\to} (q,0)$,
hence $r(x_n)\to 0$.
Note that every such $q$ is a limit of some sequence in $G$.
For each $q\in \Q\cap \tilde{U}$ and $N\geq 1$ let $B(N,q)$ be a neighbourhood of $q$ such
that $r\bigl(G\cap B(N,q)\bigr)\subseteq (0,1/N)$. Then the sets
$B_N:=\bigcup\bigl\{B(N,q):q\in\Q\cap\tilde U\bigr\}$, for $N\geq 1$, have the intersection
contained in $\tilde{U}\setminus G$. This intersection would be a dense, countable
$G_\delta$ subset of $\tilde U$. From the Baire theorem we obtain contradiction.
\end{proof}
\end{enumerate}
\end{przy}

The last three examples are well known (though not necessarily for being locally ordered)
but we describe them briefly here to make this bank of examples more complete.
For more details on them the reader is referred to \cite{counter}.

\begin{przy}[Rational sequence topology] \label{przy.rat}
For every $x\in\RbezQ$ fix one sequence of rationals $(x_i)_{i\in\N}$ convergent to $x$.
Consider set $X:=\R$ with the following topology:
for $x\in\Q$ the singleton $\{x\}$ is open,
while for $x\in \RbezQ$ the basic neighbourhoods
have the form $\{x_i:i\geq n\}\cup\{x\}$, where $n\in\N$.

\begin{enumerate}[\ 1.]
\item All basic neighbourhoods are clearly closed and orderable hence the space
is regularly locally ordered. It is also locally compact.

\item The presented topology is finer than the standard topology on $\R$.
Hence, the diagonal ($\{(x,x):x\in X\}\subseteq X\times X$)
is a $G_\delta$ subset of the product space.

\item The space $X$ is not normal ($T_4$).
It can be proven by Jones' Lemma (see e.g. \cite{willard})
since the space is separable and contains discrete subset ($\RbezQ$) of cardinality continuum.
According to Corollary~\ref{wnT5} it does not admit a locally ordered compactification.

\item A curious observation is that on each of the ordered neighbourhoods we can consider natural
order induced from $\R$, obtaining that for any two sheets from the atlas
the orders coincide on the intersection. It shows that the existence of such
``neat'' atlas does not imply any further ``regularity'' of a locally ordered space.
See also the next example.
\end{enumerate}
\end{przy}

\begin{przy}[Pointed rational extension of $\R$]\label{przy.nielc}
For each $x\in \RbezQ$ take the set $\{x\}\cup\Q$ with the order topology inherited from $\R$.
Such covering defines a locally order topology on the set $X:=\R$,
since intersection of any two sheets equals $\Q$ (with standard topology)
and is open in both.

\begin{enumerate}[\ 1.]
\item The presented topology is finer than the standard topology on $\R$.
Hence $X$ it is completely Hausdorff.

\item $X$ is separable and first-countable but not second-countable.

\item One can notice, that in the described space the only connected sets are intervals with respect
to the classical order on $\R$ and the whole space is connected. However, no point
has a connected orderable neighbourhood.

\item $X$ is not semiregular. Picking any open interval in $\Q$, its closure
would be an interval in $\R$ and taking interior does not exclude all the irrationals.

\item By adding additional point ``$\infty$'' one can close the space into a ``circle''.
Then there would be no cut-point.
\end{enumerate}
\end{przy}

\begin{przy}[Dieudonn\' e plank] \label{deska}
Consider the set $\cl{L}:=\omega_1\times\mathbb Z\cup\{(\omega_1,0)\}$ with lexicographic order.
Note, that the only limit point is such order is the greatest one, $\infty:=(\omega_1,0)$.
Denote $L:=\cl{L}\setminus\{\infty\}$. Take the sets $[0,\omega]_{\omega_1}\times\{\lambda\}$, for
$\lambda\in L$, and $\{n\}\times \cl{L}$, for $n\in\omega$, with lexicographic orders.
Intersection of any two of them is either empty or contains single point from $\omega\times L$,
which is isolated in any of the considered orders. Hence, the locally order
topology on $X:=[0,\omega]_{\omega_1}\times \cl{L}\setminus\{(\omega,\infty)\}$
is well defined. It equals the topology inherited
from the product space $[0,\omega]_{\omega_1}\times \cl{L}$

\begin{enumerate}[\ 1.]
\item It follows straight from the definition that $X$ is regularly locally ordered.
\item $X$ is not locally compact.
\item $X$ is not normal, since the closed sets $A:=[0,\omega)_{\omega_1}\times\{\infty\}$
and $B:=\{\omega\}\times L$ do not admit disjoint neighbourhoods.
It can be observed, that any neighbourhood of $A$ has at most countable
complement in $\omega\times L$, while every neighbourhood of $B$ has
uncountable intersection with $\omega\times L$.

Furthermore, $X$ does not admit a locally ordered compactification.
\item $X$ is not separable nor first-countable.
\item The space above is homeomorphic to the classical Dieudonn\'e plank (see \cite{counter}).
One can use a bijective map from $L$ to $\omega_1\times\N\simeq \omega_1$,
such that it preserves the first coordinate.
Then it can be observed that the basic neighbourhoods from one construction
are open in the other and vice versa.
\end{enumerate}
\end{przy}

\begin{rem} In \cite{herrlich} Herrlich  mentioned the space konwn as ``Deleted Tychonoff plank''
(see \cite{counter}) as an example of not normal regularly locally ordered space.
We replaced the example with Dieudonn\' e plank, since its local orderability is more explicit.
\end{rem}

\section{Appendix}
The below theorem is very well known, but the presented topological proof
seems to be worth mentioning.
\begin{thm}\label{thm.con-ord}
For every connected linearly ordered space consisting of at least two points
there exist exactly two linear orders compatible with topology.
\end{thm}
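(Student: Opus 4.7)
The plan is to recover the order $<$ from the topology up to reversal, by using the fact that the connected component structure of $X\setminus\{x\}$ is a topological invariant. I would begin by noting that a two-point linearly ordered space is disconnected, so $|X|\geq 3$ and non-endpoints necessarily exist.

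The key observation is: for every $x\in X$, the space $X\setminus\{x\}$ is connected if and only if $x$ is an $<$-endpoint, and otherwise its two connected components are precisely the $<$-intervals $(\leftarrow,x)_<$ and $(x,\rightarrow)_<$. Because this dichotomy is purely topological, any other compatible linear order $\prec$ on $X$ must have the same endpoints as $<$, and at each non-endpoint $x$ the unordered pair $\{(\leftarrow,x)_\prec,(x,\rightarrow)_\prec\}$ must coincide with $\{(\leftarrow,x)_<,(x,\rightarrow)_<\}$. So at each non-endpoint there are exactly two possible ``orientations'' of $\prec$ relative to $<$.

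Next I would fix a non-endpoint $x_0$ and, replacing $\prec$ by its reverse if needed, arrange $(\leftarrow,x_0)_\prec=(\leftarrow,x_0)_<$. The heart of the argument is the propagation claim that the analogous equality holds at every other non-endpoint $y$: if $y>x_0$, then $x_0\in(\leftarrow,y)_<$, while the fixed orientation at $x_0$ forces $x_0\prec y$ and hence $x_0\in(\leftarrow,y)_\prec$; so the matching of the two component pairs at $y$ cannot be swapped. The case $y<x_0$ is symmetric, using the complementary component. Consequently, for any two non-endpoints $a,b$ one obtains $a<b\iff a\in(\leftarrow,b)_<\iff a\in(\leftarrow,b)_\prec\iff a\prec b$.

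Endpoints (at most two) are absorbed by a short coda: the $<$-minimum $m$ is topologically distinguished as a non-cutpoint and must also be a $\prec$-endpoint; for any non-endpoint $y$, the established equality $(\leftarrow,y)_\prec=(\leftarrow,y)_<$ yields $m\in(\leftarrow,y)_\prec$, hence $m\prec y$, so $m$ is in fact the $\prec$-minimum, and analogously for the maximum. The hard part will be the propagation step, where one must use the single known orientation at $x_0$ to rule out the ``wrong'' matching at every other non-endpoint; once this local-to-global rigidity is established, the conclusion $\prec\in\{<,>\}$ is immediate.
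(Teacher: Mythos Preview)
Your argument is correct. The cut-point approach works: the unordered pair of components of $X\setminus\{x\}$ is topologically determined at each interior point, the orientation at a single base point $x_0$ forces the orientation everywhere via the simple membership argument you give, and the endpoints (at most two, and non-cutpoints) are then pinned down by transitivity through any interior point.

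The paper, however, takes a genuinely different route. It views a compatible linear order $R$ as a subset of the product $X\times X$, observes that the off-diagonal splits as the disjoint union of the two open sets $<$ and $<^{-1}$, and then shows that $R$ itself is \emph{connected}: for any two points $(x_1,y_1),(x_2,y_2)\in R$ the ``rectangles'' $(\leftarrow,x_i]_R\times[y_i,\rightarrow)_R$ lie in $R$, are connected, and overlap. Hence $R$ is contained in one of the two halves, and since no proper subset of a linear order is a linear order, $R\in\{<,<^{-1}\}$. This is shorter and has a distinctly ``global'' topological flavor (connectedness of the relation as a set), whereas your proof is more combinatorial and local-to-global: you propagate a choice of orientation from one cut-point to all others. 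Your approach has the advantage of being entirely elementary and of making explicit why cut-points control the order; the paper's approach buys brevity and avoids the separate treatment of endpoints and the propagation step altogether.
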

\begin{proof}
Let $(X,<)$ be a linearly ordered set such that the induced order topology is connected.
The space $X\times X$ with product topology is then also connected. Moreover,
from connexity, $X\times X\setminus \{(x,x):x\in X\} = < \cup <^{-1}$.

Let $R\subseteq X\times X$ be a linear order compatible with topology.
For two arbitrary points $(x_1,y_1)$ and $(x_2,y_2)$ in $R$, the sets
$(\leftarrow,x_1]_R\times[y_1,\rightarrow)_R$ and $(\leftarrow,x_2]_R\times[y_2,\rightarrow)_R$
are contained in $R$ and connected. Their intersection is not empty, since it
contains the point $(\min_R(x_1,x_2),\max_R(y_1,y_2))$.
Hence $R$ is a connected subset of the product space and
has to be contained in one of the disjoint open sets
$<=\{(x,y):x<y\}$ and $<^{-1} =\{(x,y):y<x\}$.
Since no proper subset of a linear order is a linear order, either $R=<$ or $R=<^{-1}$.
\end{proof}

\begin{lem}\label{lem.join}
If a topological space is a disjoint union of an arbitrary family of totally
ordered spaces, then it can be obtained as a disjoint union of at most two
linearly ordered spaces.
\end{lem}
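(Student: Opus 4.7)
The plan is to classify each summand $X_i$ by the type of its two endpoints, calling a side \emph{closed} if the corresponding extremum exists in $X_i$ and \emph{open} otherwise; this gives four possible types which I denote CC, OC, CO, OO (first letter for the left end, second for the right). Reversing the order on $X_i$ is a homeomorphism that swaps OC with CO and preserves OO and CC. The key technical observation is the following \emph{interface rule}: for linearly ordered $X$ and $Y$, the order sum $X+Y$ (with $X$ entirely below $Y$) has order topology equal to the disjoint union topology $X\sqcup Y$ if and only if $X$ has a maximum exactly when $Y$ has a minimum. In longer chains $\sum_{l\in L}X_l$ this extends to: at every immediate-predecessor interface the right-end tag of $X_{l^-}$ must equal the left-end tag of $X_l$, and at every $l\in L$ with no immediate predecessor but with elements below it, the space $X_l$ must have an open left end (otherwise its minimum has no neighbourhood contained in $X_l$), with a symmetric condition on the right.

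To construct the two chains I would partition the family into $\mathcal A$, containing all CC-type summands, and $\mathcal B$, containing the remaining ones (OO, OC, CO). For $\mathcal A$, choose an indexing linear order in which every position has both an immediate predecessor and an immediate successor: take $\{1,\ldots,n\}$ when $|\mathcal A|=n$ is finite and $\kappa\times\mathbb Z$ with lexicographic order when $|\mathcal A|=\kappa\geq\aleph_0$. Every interface is then CC-CC, a match, and no position is of limit type, so the chain produces one linearly ordered space. For $\mathcal B$, first reverse orientations on the half-closed summands so they come in pairs of shape $(OC,CO)$; each such pair has internal interface C-C and both outer ends open, so it behaves like a single OO-summand. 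Chain these pair-blocks together with the genuine OO-summands using any linear order of the appropriate cardinality; every interface is now O-O, and the limit rule is automatic because the left end of any OO or OC used at a limit-from-below position is open. A single leftover half-closed space, which can arise only when the total number of half-closed summands is finite and odd, is placed at one end of the chain with its orientation chosen so that its interior side is open.

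The hard part will be verifying that, for each of the two chains, the order-sum topology really equals the disjoint union topology. This reduces to showing that each summand $X_l$ is clopen in its chain and that the subspace topology from the chain matches the original order topology of $X_l$; the interface and limit rules above pinpoint exactly the places where this might fail. For $\mathcal A$ the choice of $\kappa\times\mathbb Z$ is made precisely so that every extremum of every $X_l$ has an immediate neighbour in the index whose own extremum forms the other end of an open interval isolating the desired neighbourhood inside $X_l$; without this structure, the minimum of a CC-summand lying at a limit position would be approached cofinally by tails of earlier summands and the two topologies would differ. For $\mathcal B$ the outer interfaces are all open, so neighbourhoods near any closed endpoint of an $X_l$ can be taken entirely inside $X_l$ using its own adjacent open side or the matching closed endpoint of its immediate neighbour.
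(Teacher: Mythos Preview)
Your proposal is correct and follows essentially the same strategy as the paper's proof: both classify summands by endpoint type, invoke the identical interface rule (the left piece has a maximum iff the right piece has a minimum), pair up the half-closed summands into open--open blocks, and handle the closed--closed summands via a $\mathbb{Z}$-shaped indexing (you use $\kappa\times\mathbb{Z}$ in one shot, whereas the paper groups them into countable $\mathbb{Z}$-indexed batches and feeds the resulting open--open pieces back into the $\mathcal{U}_0$ pile). The only cosmetic wrinkle is that ``any linear order'' for the $\mathcal{B}$-chain should really be one with a least element (e.g.\ an ordinal) so that a leftover half-closed summand has an end to sit at, but you clearly have this in mind.
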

\begin{proof}
By concatenation of two given linearly ordered spaces we mean extending both orders to
an order on the union in such a way, that all elements of the first space are smaller than
elements of the second. To preserve the disjointness of the union we must be sure that
the first space contains the greatest element if and only if the second one has the smallest.

Let $\mathcal{U}$ be a given family of linearly ordered spaces. We divide it into three
disjoint subfamilies $\mathcal{U}_0$, $\mathcal{U}_1$ and $\mathcal{U}_2$, namely spaces
with no extremal point, with one extremal point (smallest or greatest element)
and with both extremal points respectively. 

We can easily concatenate any two spaces from $\mathcal{U}_1$ to obtain one linearly
ordered space with no endpoint (reversing one of the orders, if necessary). Proceeding
this way we can make sure that there is at most one element in $\mathcal{U}_1$.

Similarly, we can concatenate countably many spaces from $\mathcal{U}_2$ into one space
with no endpoint, hence we may reduce to the case when the family $\mathcal{U}_2$
consists of at most one element (any finite concatenation still has both extremal points).

The family $\mathcal{U}_0$ (under no assumptions on cardinality) can be also concatenated
to obtain one linearly ordered space by using sufficiently large ordinal. We are left in
the case when all three families are at most singletons. Since space with one extremal
point can be easily concatenated (after reversing the order, if necessary)
with any linearly ordered space, we are done.
\end{proof}
Herrlich \cite{herrlich} formulated a similar, but more specific theorem providing
sufficient conditions for orderability of a disjointed union.

\subsection*{Acknowledgements}
Most of the paper had been already finnished before the author got access to the dissertation
by Herrlich \cite{herrlich}.

The author would like to thank Piotr Niemiec for valuable suggestions at several stages of work.


\begin{thebibliography}{2}
\bibitem{recprog} Bennett, H., Lutzer, D.,
\emph{Recent Developments in the Topology of Ordered Spaces.}
In: \emph{Recent progress in general topology, II}, 83--114, North-Holland, Amsterdam, 2002

\bibitem{engel} Engelking, R. \emph{General Topology. Revised and completed edition.}
Heldermann Verlag, Berlin 1989


\bibitem{herrlich} Herrlich, H. \emph{Ordnungsf\"ahigkeit topologischer R\"aume.}
Dissertation, Berlin 1962

\bibitem{closedin} Kurili\' c, M. S., Pavlovi\' c, A.,
\emph{Topologies given by closed intervals.}
 Novi Sad J. Math. Vol. 35, No. 1 (2005), 187--195

\bibitem{lutz} Lutzer, D. J. \emph{A metrization theorem for linearly ordered spaces.}
Proc. Amer. Math. Soc. 22 (1969), 557--558
 
\bibitem{nachbin} Nachbin, L. \emph{Topology and order.} Translated from the Portuguese by Lulu Bechtolsheim. Van Nostrand Mathematical Studies, No. 4 D. Van Nostrand Co., Inc., Princeton, N.J.--Toronto, Ont.--London 1965

\bibitem{history} Purisch, S. \emph{A History of Results on Orderability and Suborderability.} In:
Aull C.E., Lowen R. (eds) \emph{Handbook of the History of General Topology. History of Topology}, vol 2.
Springer, Dordrecht 1998

\bibitem{smirnov} Smirnov, Yu. M. \emph{On metrization of topological spaces.}
Uspehi Mat. Nauk, 6 (1951), 100–111.

\bibitem{counter} Steebach Jr. J. A., Steen, L. A., \emph{Counterexamples in Topology. 2nd edition.}
Springer-Verlag, New York, Heidelberg, Berlin 1978

\bibitem{colwise} Steen, L. A.,
\emph{A direct proof that a linearly ordered space is hereditarily collectionwise normal.}
Proc. Amer. Math. Soc. 24 (1970), 727--728

\bibitem{willard} Willard, S.,
\emph{General Topology.}, Addison-Wesley Publishing Company 1970
\end{thebibliography}
\end{document}